\newtheorem{theorem}{Theorem}[section]
\newtheorem{corollary}[theorem]{Corollary}
\newtheorem{lemma}[theorem]{Lemma}
\newtheorem{proposition}[theorem]{Proposition}
\theoremstyle{definition}
\newtheorem{definition}[theorem]{Definition}
\newtheorem{remark}[theorem]{Remark}
\newtheorem{example}[theorem]{Example}
\newtheorem*{xrem}{Remark}
\numberwithin{equation}{section}
\begin{document}

%%%%% To ease editing, for IMPAN journals add:

\baselineskip=17pt

%%%%%%%%%%%%%%%%

\title[On $T$-orthogonality in Banach spaces]{On $T$-orthogonality in Banach spaces}

\author[Sain]{Debmalya Sain}
\address{Departamento de Analisis Matematico\\ Universidad de Granada\\Spain}
\email{saindebmalya@gmail.com}

\author[Ghosh]{Souvik Ghosh}
\address{Department of Mathematics\\ Jadavpur University\\ Kolkata 700032\\ West Bengal\\ INDIA}
\email{sghosh0019@gmail.com}

\author[Paul]{Kallol Paul}
\address{Department of Mathematics\\ Jadavpur University\\ Kolkata 700032\\ West Bengal\\ INDIA}
\email{kalloldada@gmail.com; kallol.paul@jadavpuruniversity.in}

\date{}

\begin{abstract}
	Let $\mathbb{X}$ be a Banach space and let $\mathbb{X}^*$ be the dual space of $\mathbb{X}.$  For $x,y \in \mathbb{X},$ $ x$ is said to be $T$-orthogonal to $y$ if $Tx(y) =0,$  where $T$ is a bounded linear operator from $\mathbb{X}$ to $\mathbb{X}^*.$ 
    We  study the notion of $T$-orthogonality in a Banach space and investigate its relation with the various geometric properties, like strict convexity, smoothness, reflexivity of the space.  We explore the notions of left and right symmetric elements w.r.t. the notion of $T$-orthogonality. We characterize bounded linear operators on $\mathbb{X}$ preserving $T$-orthogonality. Finally we characterize Hilbert spaces among all Banach spaces using $T$-orthogonality.
\end{abstract}

\subjclass[2020]{Primary 46B20, Secondary 52A21}

\keywords{Banach space; Birkhoff-James orthogonality; Hilbert space; left (right) symmetric operator; $T$-orthogonality}

\maketitle

\section{Introduction}
The orthogonality notions in a Banach space play very important role in  the geometry of the underlying space. Recent literatures suggest that Birkhoff-James orthogonality and isosceles orthogonality contributed a lot towards better understanding of the geometry of Banach spaces. Needless to say all these notions of orthogonality coincide with usual notion of orthogonality in an Euclidean space. A new concept of orthogonality in the form of $T$-orthogonality in a Hausdorff topological vector space was introduced by the authors \cite{SK}  in which they have studied $T$-orthogonality additive functionals both when $T$-orthogonality is symmetric and not symmetric. Thereafter not much study has been done using $T$-orthogonality. Our main aim in this article is to study the notion of $T$-orthogonality in the setting of Banach space and explore its relation with the geometric properties of the Banach space. Before proceeding further we introduce the requisite notations and terminologies.

\smallskip

Let $\mathbb{X}$ be a Banach  space over the complex field $\mathbb{C}$ and let $\mathbb{X}^*$ denotes the dual space of $\mathbb{X}.$  For a complex number $\lambda,$ the real and imaginary part of $\lambda$ is denoted by $ \Re \, \lambda $ and $ \Im \, \lambda$ respectively.  Suppose $\mathbb{L}(\mathbb{X}, \mathbb{X}^*)$  denotes the collection of all bounded linear operators from $\mathbb{X}$ to $\mathbb{X}^*$ and $\mathbb{L}(\mathbb{X}) $ denotes the collection of all bounded linear operators on $\mathbb{X}.$ Let $ S_{\mathbb{X}}$ and $ B _{\mathbb{X}}$ denote the unit sphere and unit ball of the space  $\mathbb{X}$ i.e., $ S_{\mathbb{X}} = \{ x \in \mathbb{X}: \|x\|=1 \} $ and $ B_{\mathbb{X}} = \{ x \in \mathbb{X}: \|x\|\leq 1 \}. $ The space $\mathbb{X}$  is said to be  strictly convex if the unit sphere $ S_{\mathbb{X}} $ does not contain a non-trivial line segment i.e., if, $  (1-t)x + ty \in S_{\mathbb{X}} $ for some  $t \in (0,1) $  and $ x, y \in S_{\mathbb{X}},$ then $ x=y.$  An element $ x \neq 0 $ is said to be a smooth point of the space $\mathbb{X}$ if there exists a unique functional $ f \in \mathbb{X}^*$ such that $ f(x) = \|f\| \|x\|$ and $ \|f \| = \|x\|.$ The space $\mathbb{X}$ is said to be smooth if every non-zero element of the space $\mathbb{X}$ is a smooth point. Following \cite{SK}, $T$-orthogonality in a Banach space $\mathbb{X}$ can be defined as follows.
\begin{definition}
	Let $\mathbb{X}$ be a Banach  space and let $T \in \mathbb{L}(\mathbb{X}, \mathbb{X}^*).$  For $x, y \in \mathbb{X},$ \emph{$x$ is $T$-orthogonal to $y, $}  written as $ x \perp_T y,$ if $Tx(y) = 0.$  We write $Tx(y) = (Tx,y).$ 
\end{definition}
$T$-orthogonality is said to be \emph{symmetric} if $(Tx,y)=0 $ implies $(Ty,x) =0,$  for all $x, y \in \mathbb{X}.$ 
The operator $T$ is said to be \emph{symmetric} if $(Tx,y) = (Ty,x), $ for all $ x, y \in \mathbb{X}.$   An element $x \in \mathbb{X}$ is said to be \emph{$T$-isotropic} if $(Tx,x) =0.$  We here introduce the following definitions.
\begin{definition}
	Let $\mathbb{X}$ be a  Banach  space and let $T \in \mathbb{L}(\mathbb{X}, \mathbb{X}^*).$  Let $ \theta \in [0, 2 \pi).$ For $x, y \in \mathbb{X}, $ \emph{$ x$ is said to be $T$-orthogonal to $y$ in the direction of $\theta,$} written as $ x \perp_{T_\theta} y, $  if $ \cos \theta \, \Re (Tx,y) + \sin \theta \, \Im (Tx,y) = 0.$   For brevity, we write $ (T_\theta x, y) = \cos \theta \, \Re (Tx,y) + \sin \theta \, \Im (Tx,y).$  
	We say $y \in x_{T_{\theta}}^+$ if $ (T_\theta x, y) \geq 0$  and $ y \in x_{T_\theta}^-$ if $ (T_\theta x, y) \leq 0.$ In case $\mathbb{X}$ is a real Banach space we write $y \in x_T^+$ if $ (Tx, y) \geq 0$ and $ y \in x_T^-$ if $ (Tx, y) \leq 0. $
\end{definition}

Next we introduce the notion of  left symmetric and right symmetric points with respect to $T$-orthogonality.  
\begin{definition}
	Let $\mathbb{X}$ be a Banach  space and let $T \in \mathbb{L}(\mathbb{X}, \mathbb{X}^*).$  We say that \emph{$T$-orthogonality is left symmetric at $x \in \mathbb{X},$} if $ x \perp_T y $ implies $ y \perp_T x $ for each $ y \in \mathbb{X}.$  In short, we write `$\perp_T$ is left symmetric at $x$'. \\
	Further for each $ \theta \in [0, 2 \pi), $ we say that  \emph{$T$-orthogonality is left symmetric at $x$ in the direction of $\theta$} if $ x \perp_{T_\theta} y $ implies $ y \perp_{T_\theta}x$ for each $ y \in \mathbb{X}.$  In short, we write `$ \perp_{T_\theta}$ is left symmetric at $x$'. 
\end{definition}

\begin{definition}
	Let $\mathbb{X}$ be a  Banach  space and let $T \in \mathbb{L}(\mathbb{X}, \mathbb{X}^*).$   We say that \emph{$T$-orthogonality is right  symmetric at $x \in \mathbb{X},$} if $ y \perp_T x $ implies $ x \perp_T y $ for each $ y \in \mathbb{X}.$  In short, we write `$\perp_T$ is right symmetric at $x$'. \\
	Further for each $ \theta \in [0, 2 \pi), $ we say  that  \emph{$T$-orthogonality is right  symmetric at $x$ in the direction of $\theta$} if $ y \perp_{T_\theta} x $ implies $ x \perp_{T_\theta}y$ for each $ y \in \mathbb{X}.$  In short, we write `$ \perp_{T_\theta}$ is right symmetric at $x$'. 
\end{definition}

We need the notion of Birkhoff-James orthogonality \cite{B,Ja}  to study the geometric properties of the Banach space using $T$-orthogonality. For $x, y \in \mathbb{X}$, $ x $ is said to be Birkhoff-James orthogonal to $y$, if $ \| x + \lambda y \| \geq \|x\| $ for all $\lambda  \in \mathbb{C}.$ The marvelous paper by James \cite{J} nicely characterizes the properties like smoothness, strict convexity, reflexivity using Birkhoff-James orthogonality. 

In this article we study the symmetric properties of $\perp_T$ and $\perp_{T_\theta}$ and explore the relations between them. We also characterize the geometric properties of the space like smoothness and strict convexity using the notion of $T$-orthogonality. We also obtain necessary and sufficient condition for an operator $ A \in \mathbb{L}(\mathbb{X})$ preserving  $T$-orthogonality in terms of newly introduced notion of $T$-isometry.  An operator  $ A \in \mathbb{L}(\mathbb{X})$  is  said to be $T$-isometry if it satisfies $T = A^*TA,$ where $A^*$ is the adjoint of $A.$  Finally we characterize Hilbert spaces among Banach spaces using $T$-orthogonality.

\section{Main results}

We begin this section by noting the following basic properties in the form of an easy proposition. The proofs are omitted as they can be obtained by applying elementary geometric and algebraic reasoning. 
\begin{proposition}\label{basic}
	Let $\mathbb{X}$ be a  Banach  space and let $T \in \mathbb{L}(\mathbb{X}, \mathbb{X}^*).$
	\begin{enumerate}[label=\upshape(\roman*), leftmargin=*, widest=iii]
		\item Given any $ x, y \in \mathbb{X},$ there exists $ \theta \in [0, 2 \pi)$ such that $ x \perp_{T_\theta} y.$
		\item For $x, y \in \mathbb{X}, $  $x \perp_T y$  if and only if   $ x \perp_{T_\theta}y $ and $ x \perp_{T_\phi} y $  for some $\theta$ and $ \phi,$ where $ \theta - \phi \neq 0, \pi.$ 
		\item  For $x, y \in \mathbb{X}, $ $ x \perp_T y $ implies that $ x \perp_{T_\theta} y ,$ for each $\theta.$   However, there are operators $T$ for which  $ x \perp_{T_\theta} y, $ for some particular $\theta,$ whereas $ x \not \perp_T y.$
		\item For each $x,y \in \mathbb{X}$ and $ \theta, \phi \in [0,2 \pi), $ we have, $ (T_\theta x, e^{i \phi}y) = (T_{\theta-\phi}x,y) = (T_\theta (e^{i \phi}x), y) $  and so $ x \perp_{T_\theta} e^{i \phi}y \Leftrightarrow x \perp_{T_{\theta - \phi}} y \Leftrightarrow e^{i \phi}x \perp_{T_\theta} y .$ 
		\item $ \perp_T$ is  right  additive i.e., if $ x \perp_T y, x \perp_T z $ then $ x \perp_T (y +z) .$ 
		\item $ \perp_T$ is left additive i.e., if $x \perp_T z, y \perp_T z$ then $ (x+y) \perp_T z.$ 
	\end{enumerate}
\end{proposition}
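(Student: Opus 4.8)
The plan is to dispatch all six items by direct computation, using throughout the bilinear structure of the pairing $(x,y)\mapsto (Tx,y)$ --- linear in $x$ because $T$ is linear, and linear in $y$ because each $Tx\in\mathbb{X}^*$ --- together with elementary trigonometric identities. Fix $x,y\in\mathbb{X}$ and abbreviate $a=\Re(Tx,y)$, $b=\Im(Tx,y)$, so that $(T_\theta x,y)=a\cos\theta+b\sin\theta$.

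For (i): if $(a,b)=(0,0)$ every $\theta$ works; otherwise write $(a,b)=r(\cos\alpha,\sin\alpha)$ with $r>0$, whence $(T_\theta x,y)=r\cos(\theta-\alpha)$, which vanishes at $\theta\equiv\alpha+\pi/2\pmod{2\pi}$. Items (v) and (vi) are immediate from linearity: $(Tx,y+z)=(Tx,y)+(Tx,z)=0$ and $(T(x+y),z)=(Tx,z)+(Ty,z)=0$. For the forward direction of (ii), and hence the first assertion of (iii), note that $x\perp_T y$ means $a=b=0$, so $(T_\theta x,y)=0$ for every $\theta$. For the converse of (ii), the hypotheses $x\perp_{T_\theta}y$ and $x\perp_{T_\phi}y$ give the homogeneous linear system $a\cos\theta+b\sin\theta=0$, $a\cos\phi+b\sin\phi=0$ in the unknowns $a,b$, whose coefficient determinant equals $\cos\theta\sin\phi-\sin\theta\cos\phi=\sin(\phi-\theta)$; since $\theta-\phi\neq 0,\pi$ this is nonzero, forcing $a=b=0$, i.e.\ $(Tx,y)=0$. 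For the second assertion of (iii) I would exhibit a concrete example: take $\mathbb{X}=\mathbb{C}$ and define $T\in\mathbb{L}(\mathbb{X},\mathbb{X}^*)$ by $(Tx,y)=xy$; then with $x=y=1$ one has $(Tx,y)=1\neq 0$ while $(T_{\pi/2}x,y)=\Im 1=0$, so $x\perp_{T_{\pi/2}}y$ yet $x\not\perp_T y$.

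For (iv): using linearity of the functional $Tx$ we have $(Tx,e^{i\phi}y)=e^{i\phi}(Tx,y)$; writing $(Tx,y)=a+ib$ and expanding $e^{i\phi}(a+ib)$ into real and imaginary parts, the combination $\cos\theta\,\Re(\cdot)+\sin\theta\,\Im(\cdot)$ collapses by the angle-subtraction formulas to $a\cos(\theta-\phi)+b\sin(\theta-\phi)=(T_{\theta-\phi}x,y)$. The identity $(T_\theta(e^{i\phi}x),y)=(T_{\theta-\phi}x,y)$ follows in the same way, now using $T(e^{i\phi}x)=e^{i\phi}Tx$, and the stated chain of equivalences for $\perp_{T_\theta}$ is then a restatement. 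None of this is genuinely difficult; the only steps requiring a moment's attention are the rank argument behind the determinant in (ii) and the choice of a separating operator $T$ in (iii).
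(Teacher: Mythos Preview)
Your proof is correct and follows exactly the ``elementary geometric and algebraic reasoning'' the paper alludes to; in fact the paper omits the proof entirely, so there is nothing substantive to compare against. One small remark: for the example in (iii), the paper supplies a more elaborate operator on $\mathbb{C}^2$ (immediately after the proposition), but your one-dimensional example $(Tx,y)=xy$ on $\mathbb{X}=\mathbb{C}$ is simpler and perfectly adequate.
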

An elementary example of $T \in \mathbb{L}(\mathbb{X}, \mathbb{X}^*)$  such that  $ x \perp_{T_\theta} y, $ for some particular $\theta,$ whereas $ x  \perp_T y$  does not hold is as follows: Consider the two-dimensional complex Hilbert space $ \mathbb{C}^2.$ 
Let $T : \mathbb{C}^2 \longrightarrow  (\mathbb{C}^2)^*$ be defined as $ T(1, 0) =  7i (1, 0)^* + (0, 1)^*$ and $T(0, 1) = 2(1, 0)^* + 3i (0, 1)^*,$ where $ (1, 0)^* (z_1, z_2) = z_1$ and $ (0, 1)^*(z_1, z_2) = z_2,$ for all $(z_1, z_2) \in \mathbb{C}^2.$ Then $(0,1) \perp_{T_{\pi/4}} (1/2,-1/3) $ but $(0,1) \not \perp_T (1/2,-1/3).$ \\
Next we prove  the following proposition.

\begin{proposition}\label{symm}
	Let $\mathbb{X}$ be a complex Banach space and let $T \in L(\mathbb{X}, \mathbb{X}^*).$ Suppose that $x \in \mathbb{X}$ and $Tx \neq 0.$ If $\perp_T$ is left symmetric at $x$ then there exists a scalar $\lambda$ such that $(Tx, y) = \lambda (Ty, x)$ for all $ y \in \mathbb{X}.$ 
\end{proposition}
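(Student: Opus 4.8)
The plan is to recast the statement as an instance of the elementary fact that a bounded linear functional vanishing on the kernel of another bounded linear functional must be a scalar multiple of it. Concretely, I would introduce two functionals on $\mathbb{X}$: set $f := Tx$, so that $f \in \mathbb{X}^*$ and $f(y) = (Tx,y)$, and $f \neq 0$ by hypothesis; and define $g \colon \mathbb{X} \to \mathbb{C}$ by $g(y) := (Ty,x) = (Ty)(x)$. Linearity of $g$ in $y$ is immediate from linearity of $T$ together with linearity of evaluation at $x$, and the estimate $|g(y)| = |(Ty)(x)| \leq \|Ty\|\,\|x\| \leq \|T\|\,\|x\|\,\|y\|$ shows $g$ is bounded, so $g \in \mathbb{X}^*$ as well.

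Next I would observe that the hypothesis ``$\perp_T$ is left symmetric at $x$'' says precisely that $\ker f \subseteq \ker g$: indeed $x \perp_T y$ means $f(y) = 0$ and $y \perp_T x$ means $g(y) = 0$. Since $f \neq 0$, fix $y_0 \in \mathbb{X}$ with $f(y_0) = 1$. For an arbitrary $y \in \mathbb{X}$ the vector $y - f(y)\,y_0$ is annihilated by $f$, hence by $g$, which gives $g(y) = f(y)\,g(y_0)$; writing $\mu := g(y_0)$ this reads $(Ty,x) = \mu\,(Tx,y)$ for every $y \in \mathbb{X}$. Consequently, if $\mu \neq 0$ then $\lambda := \mu^{-1}$ works, since then $(Tx,y) = \mu^{-1}(Ty,x) = \lambda\,(Ty,x)$ for all $y$.

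The one point that needs care — and which I expect to be the crux of the matter — is disposing of the exceptional case $\mu = 0$. In that case $g \equiv 0$, i.e. $y \perp_T x$ holds for \emph{every} $y \in \mathbb{X}$; then left symmetry at $x$ holds automatically, yet because $Tx \neq 0$ there is no scalar $\lambda$ with $(Tx,y) = \lambda\,(Ty,x)$. So the argument genuinely requires the additional piece of information that the functional $y \mapsto (Ty,x)$ does not vanish identically (equivalently, that not every element of $\mathbb{X}$ is $T$-orthogonal to $x$); once this is in force, $\mu \neq 0$ and the proof concludes as above. (One may also note that, regardless of that case, the symmetric relation $(Ty,x) = \mu\,(Tx,y)$ always holds, so an alternative route is to record the conclusion in that form, which carries no exception.)
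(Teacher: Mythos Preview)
Your approach is essentially the same as the paper's: both arguments pick an element outside $\ker(Tx)$, use the decomposition $\mathbb{X} = \ker(Tx) \oplus \langle z\rangle$, invoke left symmetry on the kernel part, and then read off a scalar relating the two functionals. The paper chooses $z$ with $(Tx,z)\neq 0$ and writes $(Tx,z)=\lambda(Tz,x)$ directly, whereas you first obtain the reverse relation $(Ty,x)=\mu\,(Tx,y)$ and then invert $\mu$; the underlying idea is identical.

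Your discussion of the exceptional case $\mu=0$ is well taken and is in fact a genuine issue that the paper's proof glosses over. The paper's sentence ``Then $(Tx,z) = \lambda(Tz,x)$ for some scalar $\lambda$'' silently presupposes $(Tz,x)\neq 0$, which is exactly the nondegeneracy you isolate. Your counter-scenario is real: for instance, on $\mathbb{C}^2$ with $T(a,b)=(a+b)\,e_2^*$ and $x=e_1$, one has $Tx=e_2^*\neq 0$, left symmetry at $x$ holds trivially since $(Ty,x)=0$ for every $y$, yet no scalar $\lambda$ can satisfy $(Tx,y)=\lambda(Ty,x)$ for all $y$. So the proposition as stated requires the extra hypothesis that the functional $y\mapsto (Ty,x)$ is not identically zero, and you are right to flag it; the alternative formulation $(Ty,x)=\mu\,(Tx,y)$ that you propose is the one that goes through without exception, and it is in fact all that the paper needs in its subsequent applications.
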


\begin{proof}
	
	Clearly  there exists $ 0 \neq z \in \mathbb{X} $ such that $ (Tx,z) \neq 0.$  Then  $(Tx,z) = \lambda(Tz,x)$ for some scalar $\lambda.$ We claim that $(Tx,y) = \lambda(Ty,x),$ for all $y \in \mathbb{X}.$ As before we can write $ \mathbb{X} = Ker \, Tx \oplus \langle \{z\} \rangle.$ Let $ y \in \mathbb{X}.$ Then $ y = \alpha z + v ,$ for some scalar $ \alpha $ and $ v \in  Ker \, Tx.$  Now $(Tx,v) =0$ implies $ (Tv,x) =0 $ by left symmetricity of $\perp_T $ at $x.$ Then $ (Tx,y) = \alpha (Tx,z) + (Tx,v) = \alpha \lambda(Tz,x) + \lambda(Tv,x) = \lambda (Ty,x) .$ 
	This completes the proof.
\end{proof}

\begin{theorem} \label{direction}
	
	Let $\mathbb{X}$ be a complex Banach space and let $T \in \mathbb{L}(\mathbb{X}, \mathbb{X}^*).$ Suppose that $x \in \mathbb{X} $ and $\theta \in [0, 2\pi).$ If $\perp_{T_\theta}$ is left symmetric at $x$ then $\perp_T$ is left symmetric at $x.$
\end{theorem}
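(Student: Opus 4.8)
The plan is to deduce left symmetry of $\perp_T$ at $x$ from the hypothesis by exploiting the ``rotation'' identity in Proposition \ref{basic}(iv), which lets one convert left symmetry of $\perp_{T_\theta}$ in a single direction $\theta$ into information about every direction. Concretely, given any $y\in\mathbb{X}$ with $x\perp_T y$, I will show that $(T_\psi y,x)=0$ for every $\psi\in[0,2\pi)$; feeding two suitable directions into Proposition \ref{basic}(ii) then yields $y\perp_T x$, and since $y$ is arbitrary this is exactly left symmetry of $\perp_T$ at $x$.

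To carry this out, fix $y$ with $x\perp_T y$ and let $\phi\in[0,2\pi)$ be arbitrary. Since $(Tx,e^{i\phi}y)=e^{i\phi}(Tx,y)=0$, we have $x\perp_T e^{i\phi}y$, and therefore $x\perp_{T_\theta}e^{i\phi}y$ by Proposition \ref{basic}(iii). Now the hypothesis that $\perp_{T_\theta}$ is left symmetric at $x$, applied to the vector $e^{i\phi}y\in\mathbb{X}$, gives $e^{i\phi}y\perp_{T_\theta}x$, i.e.\ $(T_\theta(e^{i\phi}y),x)=0$. By the identity $(T_\theta(e^{i\phi}u),v)=(T_{\theta-\phi}u,v)$ from Proposition \ref{basic}(iv) (with $u=y$, $v=x$), this says precisely $(T_{\theta-\phi}y,x)=0$.

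Since $\phi$ ranged over all of $[0,2\pi)$, the quantity $\theta-\phi$ ranges over all residues modulo $2\pi$, so $(T_\psi y,x)=0$ for every $\psi\in[0,2\pi)$. Choosing $\psi=0$ and $\psi=\pi/2$ gives $\Re(Ty,x)=\Im(Ty,x)=0$, hence $(Ty,x)=0$, that is $y\perp_T x$ (one could equally invoke Proposition \ref{basic}(ii) with these two directions, whose difference is neither $0$ nor $\pi$). As $y\in\mathbb{X}$ was arbitrary, $\perp_T$ is left symmetric at $x$.

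I expect the only non-routine point to be the idea of replacing $y$ by its rotates $e^{i\phi}y$ and reading the single-direction symmetry hypothesis through Proposition \ref{basic}(iv); once that device is in place the argument is purely formal. A minor point to confirm is that vanishing of $(T_\psi y,x)$ for all $\psi$ genuinely forces $(Ty,x)=0$, which is immediate from $(T_0 y,x)=\Re(Ty,x)$ and $(T_{\pi/2}y,x)=\Im(Ty,x)$.
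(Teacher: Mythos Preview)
Your proof is correct and follows essentially the same approach as the paper: both use the rotation identity of Proposition~\ref{basic}(iv) to leverage the single-direction hypothesis at $\theta$ into vanishing of $(T_\psi y,x)$ for all $\psi$, and then conclude via Proposition~\ref{basic}(ii). The only cosmetic difference is that the paper first records the intermediate (slightly stronger) claim that $\perp_{T_\gamma}$ is left symmetric at $x$ for every $\gamma$, whereas you apply the hypothesis directly to the rotates $e^{i\phi}y$ and proceed straight to the conclusion.
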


\begin{proof} Let $ \gamma \in [0, 2 \pi) .$ We first claim that $\perp_{T_\gamma}$ is left symmetric at $x$.  Let $ x \perp_{T _\gamma} y.$ Then by Proposition \ref{basic} (iv), we get $ 0 = ( T_\gamma x, y ) = (T_\theta x, e^{i(\theta- \gamma)} y) .$  So we get $( T_\theta (e^{i(\theta - \gamma)}y, x ) = 0,$ by left symmetricity of $T_\theta$ at $x.$ Again by Proposition \ref{basic} (iv), we get, $ (T_\gamma y, x) = ( T_\theta (e^{i(\theta - \gamma)}y, x ) = 0,$ and hence $  y \perp_{T_\gamma} x. $ Thus $ \perp_{T_\gamma}$ is left symmetric at $x.$  Now we show that $\perp_T$ is left symmetric at $x.$ Let $ x \perp_T y.$ Then  $x \perp_{T_\theta} y $ for any $\theta \in [0, 2 \pi)$ and by left symmetricity of $\perp{T_\theta}$ we get, $ y \perp_{T_\theta} x .$ Then by Proposition \ref{basic} (ii),  we get,  $  y \perp_T x.$  
\end{proof} 

In this following example we show that the converse is not true in general.
\begin{example} \label{example}
	Consider the two-dimensional complex Hilbert space $ \mathbb{C}^2.$ Every element of $ \mathbb{C}^2$ can be written as $ (z_1, z_2),$ where $ z_1, z_2 \in \mathbb{C}.$ 
	Let $T : \mathbb{C}^2 \longrightarrow  (\mathbb{C}^2)^*$ be defined as $ T(1, 0) = i (0, 1)^*$ and $T(0, 1) = (1, 0)^* + 3i (0, 1)^*,$ where $ (1, 0)^* (z_1, z_2) = z_1$ and $ (0, 1)^*(z_1, z_2) = z_2,$ for all $(z_1, z_2) \in \mathbb{C}^2.$ Clearly, $(1, 0)$ is an isotropic vector and $\perp_T$ is both left and right symmetric at $(1, 0).$ Now for $ (1, i) \in \mathbb{C}^2$ it is straightforward to see that $ (1, 0) \perp_{T_{\pi/2}} (1, i)  $ whereas $ (1, i) \not \perp_{T_{\pi/2}} (1, 0).$ Therefore, $ \perp_{T_{\pi/2}}$ is not left symmetric at $(1, 0).$
\end{example}

\begin{xrem}
	The above example shows that for isotropic vector $x$ the converse of Theorem \ref{direction} may not hold true. However, if $x$ is a nonisotropic vector then following Proposition \ref{symm}, we get a scalar $\lambda $ such that $(Tx, y) = \lambda (Ty, x)$  for all $y \in \mathbb{X}.$  In case $ \lambda $ is real,  then  $\perp_T$ is left symmetric at $x$ will imply that $\perp_{T_\theta}$ is left symmetric at $x$ for all $\theta \in [0, 2\pi).$  
\end{xrem}

\begin{theorem} \label{theorem}
	Let  $\mathbb{X}$ be a complex Banach space and let $T \in \mathbb{L}(\mathbb{X}, \mathbb{X}^*).$ Then the followings hold:
	\begin{enumerate}[label=\upshape(\roman*), leftmargin=*, widest=iii]
	 \item For any nonisotropic vector $x \in \mathbb{X},$ $\perp_T$ is left symmetric at $x$ if and only if $\perp_T$ is right symmetric at $x.$
	 \item If $T$ is bijective then for any nonzero isotropic vector $x\in \mathbb{X},$ $\perp_T$ is left symmetric at $x$ if and only if $\perp_T$ is right symmetric at $x.$
	\end{enumerate} 
\end{theorem}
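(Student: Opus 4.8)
The plan is to treat the two parts separately, in each case exploiting Proposition~\ref{symm} to pass from the asymmetry of $\perp_T$ to a concrete algebraic identity relating $(Tx,y)$ and $(Ty,x)$.

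For part (i), let $x$ be nonisotropic, so $(Tx,x)\neq 0$ and in particular $Tx\neq 0$. First I would prove the forward implication: if $\perp_T$ is left symmetric at $x$, then by Proposition~\ref{symm} there is a scalar $\lambda$ with $(Tx,y)=\lambda(Ty,x)$ for all $y\in\mathbb{X}$. Putting $y=x$ gives $(Tx,x)=\lambda(Tx,x)$, and since $(Tx,x)\neq 0$ this forces $\lambda=1$, i.e.\ $T$ is symmetric along $x$: $(Tx,y)=(Ty,x)$ for all $y$. Right symmetry at $x$ is then immediate: if $(Ty,x)=0$ then $(Tx,y)=(Ty,x)=0$. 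The converse direction needs the analogue of Proposition~\ref{symm} for right symmetry; I expect this is proved by the same argument as Proposition~\ref{symm} with the roles of the two slots swapped (note that $\mathbb{X}=\mathrm{Ker}\,Tx\oplus\langle\{z\}\rangle$ is still available since $Tx\neq 0$), yielding a scalar $\mu$ with $(Ty,x)=\mu(Tx,y)$; again $y=x$ and nonisotropy give $\mu=1$, and left symmetry follows. So the whole of (i) reduces to running the Proposition~\ref{symm} argument in both directions and using the single substitution $y=x$ to pin the scalar to $1$.

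For part (ii), let $x\neq 0$ be isotropic, so $(Tx,x)=0$, and assume $T$ is bijective. Now the substitution $y=x$ in the identity $(Tx,y)=\lambda(Ty,x)$ is useless, so the scalar $\lambda$ from Proposition~\ref{symm} is not determined this way. Instead I would argue as follows for the forward implication. Suppose $\perp_T$ is left symmetric at $x$; since $x\neq 0$ and $T$ is injective, $Tx\neq 0$, so Proposition~\ref{symm} gives $(Tx,y)=\lambda(Ty,x)$ for all $y$. If $\lambda=0$ then $(Tx,y)=0$ for all $y$, contradicting $Tx\neq 0$ (the functional $Tx$ is nonzero). Hence $\lambda\neq 0$, and we may rewrite the identity as $(Ty,x)=\lambda^{-1}(Tx,y)$ for all $y$. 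This says exactly that $\perp_T$ is right symmetric at $x$: if $(Ty,x)=0$ then $(Tx,y)=0$. For the converse, suppose $\perp_T$ is right symmetric at $x$. Here I would use bijectivity of $T$ more seriously: the functional-valued map $y\mapsto Ty$ is a linear bijection $\mathbb{X}\to\mathbb{X}^*$, so $\{Ty:y\in\mathrm{Ker}(\,\cdot\,(x))\}$ — i.e.\ the image under $T$ of the hyperplane $H=\{y:(Ty,x)=0\}$ — is a hyperplane in $\mathbb{X}^*$; right symmetry at $x$ says $(Tx,y)=0$ for all $y\in H$, i.e.\ $Tx$ (viewed as an element of $\mathbb{X}^{**}$, evaluating functionals at points) annihilates this hyperplane $T(H)$. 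One then deduces that $Tx$ must be a scalar multiple, in the appropriate duality sense, of the functional $y\mapsto(Ty,x)$; running this through $T^{-1}$ produces a scalar $\mu$ with $(Tx,y)=\mu(Ty,x)$ for all $y$, and again $\mu\neq 0$ since $Tx\neq 0$, giving left symmetry. Alternatively, and more cleanly, I would prove a right-handed version of Proposition~\ref{symm}: if $\perp_T$ is right symmetric at $x$ and the functional $y\mapsto(Ty,x)$ is nonzero — which holds because $T$ is surjective, so some $Ty$ does not vanish at $x$ — then decomposing $\mathbb{X}=\{y:(Ty,x)=0\}\oplus\langle\{w\}\rangle$ for a suitable $w$ and repeating the Proposition~\ref{symm} computation yields the scalar $\mu$ directly.

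The main obstacle is the converse direction of part (ii): unlike in Proposition~\ref{symm}, where left symmetry at $x$ lets one work inside the concrete hyperplane $\mathrm{Ker}\,Tx$, right symmetry at $x$ constrains $y$ through the condition $(Ty,x)=0$, which is a hyperplane defined via the functional $y\mapsto(Ty,x)\in\mathbb{X}^*$ rather than via a fixed element of $\mathbb{X}^*$; making the decomposition $\mathbb{X}=\{y:(Ty,x)=0\}\oplus\langle w\rangle$ legitimate requires knowing this functional is nonzero, which is precisely where surjectivity (and hence bijectivity) of $T$ enters, and injectivity is what guarantees $Tx\neq 0$ so that the reverse implication's scalar is nonzero. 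I would make sure to state and prove this right-handed analogue of Proposition~\ref{symm} as a separate lemma first, so that parts (i) and (ii) both become short deductions: in (i) the extra input is the substitution $y=x$ forcing the scalar to be $1$, and in (ii) the extra input is that $Tx\neq 0$ forces the scalar to be nonzero, hence invertible.
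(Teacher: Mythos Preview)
Your argument is correct and takes a genuinely different route from the paper. The paper proves both parts by direct contradiction: for (i) it assumes $y\perp_T x$ but $x\not\perp_T y$, writes $y-\alpha x\in\mathrm{Ker}\,Tx$ for a suitable scalar $\alpha$, and derives $Ty(x)=Tx(y)$ from left symmetry, a contradiction; for (ii) it argues, again by contradiction, that the failure of right (resp.\ left) symmetry forces $\mathrm{Im}\,T\subset\{x\}^0$ (resp.\ $Tx=0$) via explicit decompositions of $\mathbb{X}$ and of $\mathbb{X}^*$. Your approach instead packages everything through Proposition~\ref{symm} and its right-handed analogue, so that both directions of both parts reduce to the single assertion ``there is a nonzero scalar $\lambda$ with $(Tx,y)=\lambda(Ty,x)$ for all $y$'', the scalar being pinned to $1$ in part (i) by the substitution $y=x$ and merely shown nonzero in part (ii). This is more uniform and in fact recovers Lemma~\ref{lemma} as a by-product of (i). One caveat worth making explicit: the proof of Proposition~\ref{symm} tacitly assumes that the chosen $z$ with $(Tx,z)\neq 0$ also has $(Tz,x)\neq 0$, which can fail for an isotropic $x$ when $T$ is not surjective; so in your forward direction of (ii) the invocation of Proposition~\ref{symm} is legitimate precisely because surjectivity of $T$ guarantees the functional $y\mapsto(Ty,x)$ is nonzero, and this is where bijectivity really enters (not just injectivity, as your write-up suggests). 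Your right-handed analogue for the converse of (ii) handles this point correctly.
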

\begin{proof}
	(i) We have $Tx(x) \neq 0.$  Assume $\perp_T$ is left symmetric at $x.$ We show that $\perp_T$ is right symmetric at $x.$
	If possible let $ y \perp_T x $ but $ x \not \perp_T y.$ Then $Tx(y) \neq 0$  and so $ Tx(y) = \alpha Tx(x) $ for some scalar $\alpha.$ This implies $Tx(y - \alpha x) =0$ so that $ x \perp_T (y -\alpha x).$ Since $\perp_T $ is left symmetric at $x$ so we get $ (y - \alpha x) \perp_T x $. Thus $ T (y- \alpha x) (x) =0  \Rightarrow Ty(x) = \alpha Tx(x) = Tx(y). $  This leads to a   contradiction. Thus
	$\perp_T$ is right symmetric at $x.$ Analogously one can show that if  $\perp_T$ is right symmetric at $x$ then $\perp_T$ is left  symmetric at $x.$
	
	\smallskip
	(ii) Assume $\perp_T$ is left symmetric at $x$ where $ x \neq 0$ and $ (Tx,x) =0.$ If possible let there exist $ z \in \mathbb{X}$ such that $ z \perp_T x $ but $ x \not \perp z.$  Then $ z \not \in  Ker \,Tx $ and $Tx $ being a linear functional on $\mathbb{X},$ we get $ \mathbb{X} = Ker \, Tx \oplus \langle \{z\} \rangle.$  Let $ w \in \mathbb{X}.$ Then $w = \alpha z + v $ where $ v \in \, Ker \,Tx $ and $ \alpha $ is a scalar. So $Tx(v) =0 $ implies $x \perp_T v $ and hence  $ v \perp_T x ,$ by left symmetricity of $\perp_T$ at $x.$  Thus $Tv(x) =0 .$  This shows that $ Im \, T \subset \{x\}^0$ where $ \{x\}^0$ is the annihilator of $x.$ Then by bijectivity of $T$ we get, $ \mathbb{X}^* = \{x\}^0 $  which contradicts the fact that $x \neq 0.$ Thus $\perp_T$ is right symmetric at $x.$ \\
	Next we assume that  $\perp_T$ is right symmetric at $x$ where $ x \neq 0$ and $ (Tx,x) =0.$ If possible let there exist $ z \in \mathbb{X}$ such that $ x \perp_T z $ but $ z \not \perp x.$  Then $Tz(x) \neq 0 $ and so $ Tz \not \in \{x\}^0.$  Now $ \mathbb{X}^* = \{x\}^0 \oplus \langle \{Tz\}\rangle .$  Let $ u \in \mathbb{X}.$ Then $ Tu \in \mathbb{X}^*$ and so $ Tu = g + \alpha Tz$ for some $ g \in \{x\}^0$ and for some scalar $\alpha.$ Since $T $ is bijective so there exists unique $v \in \mathbb{X}$ such that $Tv=g$ and $ Tv(x) =0$. This implies $ v \perp_T x $ and by right symmetricity of $\perp_T$ we get, $ x \perp_T v.$  Now  $ T u = T v + \alpha Tz $ implies $ u = v  + \alpha z.$  Then $Tx(u) = Tx(v) + Tx(z) =0 $ so that $ x \perp_T u.$ Thus for each $u \in \mathbb{X}$, $Tx(u) = 0$ and so $ Tx=0$. This implies $x=0$, by injectivity of $T$. This is a contradiction. Thus $\perp_T$ is left symmetric at $x.$ 
\end{proof}

In the following example we show that in Theorem \ref{theorem} (ii), the `bijectivity' condition on $T$  can not be omitted.

\begin{example}
	Let $ \ell_2^2$ be the 2-dimensional real Hilbert space.
	Consider $ T : \ell_2^2 \longrightarrow (\ell_2^2)^*$ is defined as $T(1, 0) = (1, -1)^*,$ where $ (1, -1)^*(x, y) = x-y.$ and $ T(0,1) = (2, -2)^*,$ where $ (2, -2)^*(x, y) = 2x -2y.$ Clearly, $ T$ is not bijective and $ (1, 1)$ is a isotropic vector. It is easy to check that $\perp_T$ is left symmetric at $(1, 1).$ On the other hand, observe that $ (1, 0) \perp_T (1, 1)$ whereas $(1, 1) \not\perp_T (1, 0).$ This implies that $\perp_T$ is not right symmetric at $(1, 1).$  
\end{example}

Our next lemma deals with an interesting observation about nonisotropic vectors.

\begin{lemma} \label{lemma}
	Let $\mathbb{X}$ be a complex Banach space and let $T \in \mathbb{L}(\mathbb{X}, \mathbb{X}^*).$ Suppose that $ x \in \mathbb{X}$ is a nonisotropic vector at which $\perp_T$ is left (right) symmetric. Then $(Tx, y) = (Ty, x),$ for all $y \in \mathbb{X}.$
\end{lemma}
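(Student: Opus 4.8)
The plan is to reduce everything to Proposition \ref{symm}. First I would record the elementary but crucial observation that a nonisotropic vector satisfies $(Tx,x)\neq 0$, which in particular forces $Tx\neq 0$; hence Proposition \ref{symm} is applicable to $x$ whenever $\perp_T$ is left symmetric at $x$.

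Next, treat the left-symmetric case. By Proposition \ref{symm} there is a scalar $\lambda$ with $(Tx,y)=\lambda(Ty,x)$ for every $y\in\mathbb{X}$. Substituting $y=x$ gives $(Tx,x)=\lambda(Tx,x)$, and since $(Tx,x)\neq 0$ we must have $\lambda=1$. Therefore $(Tx,y)=(Ty,x)$ for all $y\in\mathbb{X}$, which is exactly the assertion.

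For the right-symmetric case I would invoke Theorem \ref{theorem}(i): at a nonisotropic vector $x$, $\perp_T$ is right symmetric at $x$ if and only if it is left symmetric at $x$. Thus right symmetry of $\perp_T$ at the nonisotropic $x$ implies left symmetry of $\perp_T$ at $x$, and the conclusion follows from the previous paragraph applied verbatim.

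I do not expect any genuine obstacle here, since the statement is essentially a distillation of Proposition \ref{symm} together with Theorem \ref{theorem}(i); the only point requiring a moment's care is verifying that non-isotropy gives $Tx\neq 0$ (so that Proposition \ref{symm} may be invoked) and that the reduction of the right-symmetric case to the left-symmetric one through Theorem \ref{theorem}(i) is legitimate. An alternative, self-contained route for the right-symmetric case would be to redo the kernel-decomposition argument of Proposition \ref{symm} directly with the roles of the two arguments of $T$ interchanged, but citing Theorem \ref{theorem}(i) is shorter and avoids repetition.
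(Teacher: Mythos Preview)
Your proof is correct. The paper's argument for the left-symmetric case is a slightly more direct variant of the same idea: instead of invoking Proposition~\ref{symm} to obtain a scalar $\lambda$ with $(Tx,y)=\lambda(Ty,x)$ and then forcing $\lambda=1$ via $y=x$, the paper picks $\alpha=-\dfrac{(Tx,y)}{(Tx,x)}$ so that $x\perp_T(\alpha x+y)$, applies left symmetry to get $(T(\alpha x+y),x)=0$, and reads off $(Tx,y)=(Ty,x)$ in one step. The underlying computation is identical; your version is more modular (reusing Proposition~\ref{symm}), while the paper's is self-contained. For the right-symmetric case the paper gives no separate argument, so your reduction via Theorem~\ref{theorem}(i) is exactly what is tacitly intended.
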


\begin{proof}
	Since $\perp_T$ is left symmetric at $x,$ $(Tx, y)= 0$ implies $(Ty, x)= 0.$ Suppose that $(Tx, y) \neq 0.$ It is easy to observe that for $\alpha \in \mathbb{C},$ $(Tx, \alpha x+y) = \alpha (Tx, x) + (Tx, y).$ Choosing $ \alpha = \frac{-(Tx, y)}{(Tx, x)}$ we obtain that $(Tx, \alpha x+y)=0.$ As $\perp_T$ is left symmetric at $x,$ \[(T(\alpha x+y), x)=0 \implies \alpha (Tx, x) + (Ty, x)=0 \implies (Tx, y)= (Ty, x).\]
	This proves our lemma.
\end{proof}

Before we proceed to our next theorem we would like to note that the above lemma is not necessarily true for isotropic vector. The following example illustrates our claim.

\begin{example}
	Let $ T : \ell_2^2 \longrightarrow (\ell_2^2)^*$ be defined as $ T(1, 0) = (0, 1)^*$ and $ T(0, 1) = (\frac{1}{\sqrt{2}}, \frac{1}{\sqrt{2}})^*,$ where $\ell_2^2$ is 2-dimensional real Hilbert space. It is easy to verify that $ (1, 0)$ is an isotropic vector and $ \perp_T$ is both left and right symmetric at $(1, 0).$ For any $ (\alpha, \beta), (x, y) \in \ell_2^2,$ $(T(\alpha, \beta), (x, y)) = \frac{\beta}{\sqrt{2}}x +(\alpha + \frac{\beta}{\sqrt{2}})y.$ Then $ (T(\alpha, \beta), (1, 0)) = \frac{\beta}{\sqrt{2}}$ and $(T(1, 0), (\alpha, \beta)) = \beta.$ Therefore, whenever $ \beta \neq 0,$ we have $ (T(\alpha, \beta), (1, 0)) \neq (T(1, 0), (\alpha, \beta)).$
\end{example}

In our next theorem we characterize $T \in \mathbb{L}(\mathbb{X}, \mathbb{X}^*)$ which are symmetric. Note that the following theorem is a clear improvement of \cite[Lemma 1]{SK}.

\begin{theorem} \label{nonisotropic; isotropic}
	Let $\mathbb{X}$ be a complex Banach space and let $T \in \mathbb{L}(\mathbb{X}, \mathbb{X}^*)$ be nonzero. Then $T$ is symmetric if and only if the followings hold true:
	\begin{enumerate}[label=\upshape(\roman*), leftmargin=*, widest=iii]
		\item There exists a nonisotropic vector $x \in \mathbb{X}.$
		\item $\perp_T$ is left(right) symmetric at $y,$ where $y $ is a nonisotropic vector in $\mathbb{X}.$
	\end{enumerate}
	
\end{theorem}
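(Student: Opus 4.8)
The plan is to prove the two directions separately, and I expect the "if" direction (conditions (i)--(ii) imply $T$ symmetric) to be the substantive one, since the "only if" direction is nearly immediate. For the "only if" part: if $T$ is symmetric then $(Tx,y)=(Ty,x)$ for all $x,y$, so $\perp_T$ is automatically both left and right symmetric at every point; and since $T$ is nonzero, there exists $z$ with $Tz\neq 0$, hence some $w$ with $(Tz,w)\neq 0$, and then symmetry gives $(Tz,w)=(Tw,z)$; a short averaging/polarization argument using vectors of the form $z+\mu w$ produces a nonisotropic vector, establishing (i) (and (ii) holds for any nonisotropic vector).

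For the "if" direction, suppose (i) and (ii) hold, say $y$ is nonisotropic and $\perp_T$ is left (or right) symmetric at $y$. By Lemma \ref{lemma} we immediately get $(Ty,w)=(Tw,y)$ for all $w\in\mathbb{X}$. The task is to bootstrap this single-vector symmetry into full symmetry $(Tu,w)=(Tw,u)$ for all $u,w$. The key idea: for an arbitrary $u\in\mathbb{X}$, I want to show $\perp_T$ is left symmetric at $u$ as well, or more directly that $u$ can be related to $y$ in a way that transfers the symmetry. Concretely, consider the sesquilinear-type form $B(u,w)=(Tu,w)-(Tw,u)$; it is linear in $u$, conjugate-behaviour in $w$ need not matter — what matters is that $B(y,\cdot)\equiv 0$ and $B$ is a bounded form, and I want $B\equiv 0$.

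The mechanism I would use is an orthogonality-transfer argument. Given any $u$, decompose using $Tu$ as a functional: if $Tu\neq 0$ then $\mathbb{X}=\ker Tu \oplus \langle\{v\}\rangle$ for suitable $v$ with $(Tu,v)\neq 0$; I'd try to show $\perp_T$ is left symmetric at $u$ by using that $y$ is nonisotropic together with the identity from Lemma \ref{lemma}. Alternatively — and this is probably the cleanest route — fix the nonisotropic vector $y$ with $(Ty,y)\neq 0$ and $(Ty,w)=(Tw,y)$ for all $w$; for arbitrary $u,w$, compute $(T(u+\lambda y), w+\mu y)$ and its "flip" $(T(w+\mu y),u+\lambda y)$, and by choosing $\lambda,\mu$ to make appropriate components vanish (exploiting that $(Ty,y)\neq 0$ so we can always solve the relevant linear equations), reduce the desired equality $(Tu,w)=(Tw,u)$ to the already-known identity involving $y$. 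The main obstacle will be handling the complex scalars correctly — ensuring the coefficients that appear are genuinely eliminable and that no conjugation issue obstructs the cancellation — and dealing with the degenerate subcases (e.g. when $Tu=0$, or when $u,w,y$ fail to be independent), which should follow by continuity/perturbation: replace $u$ by $u+\epsilon y$, apply the generic argument, and let $\epsilon\to 0$, using that nonisotropy is an open condition.
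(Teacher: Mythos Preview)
Your proposal misreads condition (ii). In the paper's proof the hypothesis is used as a \emph{universal} statement: $\perp_T$ is left (equivalently right) symmetric at \emph{every} nonisotropic vector $y\in\mathbb{X}$, not merely at one fixed nonisotropic $y$. Under your single-vector reading the theorem is actually false, so no bootstrap of the kind you sketch can succeed. For instance, on $\mathbb{X}=\mathbb{C}^3$ take $T$ determined by the matrix
\[
A=\begin{pmatrix}1&0&0\\0&0&1\\0&0&0\end{pmatrix},\qquad (Tx,y)=x^{t}Ay.
\]
Then $e_1$ is nonisotropic and $(Te_1,w)=w_1=(Tw,e_1)$ for all $w$, so $\perp_T$ is both left and right symmetric at $e_1$; yet $(Te_2,e_3)=1\neq 0=(Te_3,e_2)$, so $T$ is not symmetric. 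Your orthogonality-transfer and perturbation schemes therefore cannot close the gap: left symmetry at a single nonisotropic vector only forces the ``row'' and ``column'' of $T$ through that vector to agree, and carries no information about the rest of the form.

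With the correct universal reading of (ii), the sufficient direction is short and does not need any bootstrap. For arbitrary $z,w\in\mathbb{X}$: if either is nonisotropic, Lemma~\ref{lemma} applied at that vector gives $(Tz,w)=(Tw,z)$ directly. If both are isotropic, use the fixed nonisotropic $x$ from (i) together with \cite[Lemma~1]{SK} to see that one of $x\pm z$ is nonisotropic; apply Lemma~\ref{lemma} at that vector to get $(T(x\pm z),w)=(Tw,x\pm z)$, expand linearly, and cancel the $x$-terms using Lemma~\ref{lemma} at $x$. Your treatment of the necessary direction (polarization to produce a nonisotropic vector) is correct and matches the paper's argument.
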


\begin{proof}
	First we prove the sufficient part of the theorem. Suppose that $z, w \in \mathbb{X}$ are two arbitrary elements in $\mathbb{X}.$ If one of them is nonisotropic then from Lemma \ref{lemma}, we get $(Tz, w) = (Tw, z).$ Let $ z, w$ both be isotropic. Then from \cite[Lemma 1]{SK} we have either $x +z$ or $x - z$ is nonisotropic. Without loss of generality we assume that $x+z$ is nonisotropic. Therefore,
	again using Lemma \ref{lemma}, we obtain 
	\[ (T(x+z), w) = (Tw, x+z) \implies (Tx, w)+(Tz, w) = (Tw, x) + (Tw, z). \] Since $x $ is nonisotropic, $(Tx, w)= (Tw, x).$ Therefore, $(Tz, w)=(Tw, z).$ Hence $T$ is symmetric.\\
	Let us now prove the necessary part. Clearly (ii) holds  if $T$ is symmetric. We just prove $(i).$ Suppose on the contrary that every vector in $\mathbb{X} $ is isotropic, i.e., $(Tx, x) = 0,$ for all $x \in \mathbb{X}.$ Take $u, v \in \mathbb{X}.$ Then $(T(u+v), u+v) = 0$ which implies $(Tu, v) = -(Tv, u).$ As $T$ is symmetric, it can be readily seen that $(Tu, v) = 0.$ As $u, v$ are chosen arbitrarily, $T $ is zero. This contradiction proves the necessary part of the theorem.
	
\end{proof}

\begin{theorem} \label{left symmetric}
	
	Let $\mathbb{X}$ be a complex Banach space and let $T\in \mathbb{L}(\mathbb{X}, \mathbb{X}^*).$ Suppose that $x \in \mathbb{X}.$ Then $\perp_T$ is left symmetric at $x$ if and only if there exists $\phi_0 \in [0, 2\pi)$ such that the followings hold true:
	\[ y \in x_{T_\theta}^+ \implies x \in y_{T_{\theta - \phi_0}}^+ \,  \mbox{and}~ \,
	y \in x_{T_\theta}^- \implies x \in y_{T_{\theta -\phi_0}}^-,\, \mbox{for every }  \, \theta \in [0, 2\pi).\]
	
\end{theorem}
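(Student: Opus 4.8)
The plan is to prove both directions by exploiting the relationship between $T$-orthogonality and the one-parameter family $\perp_{T_\theta}$, together with the elementary identities in Proposition \ref{basic}. The key observation is that the "half-space" conditions $y \in x_{T_\theta}^+$ and $y \in x_{T_\theta}^-$ together are equivalent to $x \perp_{T_\theta} y$, so the displayed implications encode a controlled transfer of orthogonality between $x$ and $y$ with a \emph{shift} $\phi_0$ in the direction. First I would try to identify what $\phi_0$ should be. By Proposition \ref{symm}, if $Tx \neq 0$ and $\perp_T$ is left symmetric at $x$, there is a scalar $\lambda$ with $(Tx,y) = \lambda (Ty,x)$ for all $y$; writing $\lambda = |\lambda| e^{i\phi_0}$ (or rather choosing $\phi_0 = \arg \lambda$, handling the case $\lambda = 0$ or $Tx = 0$ separately), I expect the sign behaviour of $(T_\theta x, y)$ versus $(T_{\theta - \phi_0} y, x)$ to match up exactly. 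The case $Tx = 0$ is trivial since then $x \perp_T y$ for all $y$, left symmetry forces $Ty(x) = 0$ for all $y$, and any choice of $\phi_0$ works.

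**The forward direction.**

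Assume $\perp_T$ is left symmetric at $x$, with $Tx \neq 0$. Take $\lambda$ from Proposition \ref{symm} so that $(Tx, y) = \lambda (Ty, x)$ for all $y \in \mathbb{X}$. If $\lambda = 0$ then $(Tx,y) = 0$ for all $y$, contradicting $Tx \neq 0$, so $\lambda \neq 0$; set $\phi_0 \in [0,2\pi)$ with $e^{i\phi_0} = \lambda / |\lambda|$. Now for any $\theta$ and any $y$, I would compute $(T_\theta x, y) = \Re(e^{-i\theta}(Tx,y)) = \Re(e^{-i\theta}\lambda (Ty,x)) = |\lambda|\,\Re(e^{i(\phi_0 - \theta)}(Ty,x)) = |\lambda|\,(T_{\theta - \phi_0} y, x)$. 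Since $|\lambda| > 0$, the sign of $(T_\theta x, y)$ equals the sign of $(T_{\theta-\phi_0} y, x)$, which gives precisely $y \in x_{T_\theta}^+ \Leftrightarrow x \in y_{T_{\theta-\phi_0}}^+$ and likewise for the negative parts, so in particular the two stated implications hold.

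**The converse direction and the main obstacle.**

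For the converse, suppose such a $\phi_0$ exists. I want to show $\perp_T$ is left symmetric at $x$, i.e. $x \perp_T y \Rightarrow y \perp_T x$. Fix $y$ with $x \perp_T y$. By Proposition \ref{basic}(iii), $x \perp_{T_\theta} y$ for \emph{every} $\theta$, which means $y \in x_{T_\theta}^+$ and $y \in x_{T_\theta}^-$ simultaneously for every $\theta$. Applying the hypothesis to both, I get $x \in y_{T_{\theta-\phi_0}}^+$ and $x \in y_{T_{\theta-\phi_0}}^-$ for every $\theta$, hence $(T_{\theta-\phi_0} y, x) = 0$ for every $\theta$; letting $\theta$ range over $[0,2\pi)$ makes $\theta - \phi_0$ range over all directions, so $y \perp_{T_\gamma} x$ for all $\gamma$, and by Proposition \ref{basic}(ii) (taking two directions differing by something other than $0, \pi$) we conclude $y \perp_T x$. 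The main obstacle I anticipate is in the forward direction and lies entirely in handling the \emph{isotropic} case $Tx \neq 0$ but $(Tx,x) = 0$, where Proposition \ref{symm} still applies (it only needs $Tx \neq 0$), so the argument above should go through unchanged — but I would double check that Proposition \ref{symm}'s decomposition $\mathbb{X} = \operatorname{Ker} Tx \oplus \langle z \rangle$ does not secretly require nonisotropy; reading its proof, it does not. A secondary subtlety is that $\phi_0$ must be chosen \emph{independently of $\theta$ and $y$}, which is exactly what the uniform scalar $\lambda$ from Proposition \ref{symm} delivers, so no difficulty arises there.
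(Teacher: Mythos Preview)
Your proof is correct and follows essentially the same approach as the paper: the sufficiency argument is identical (use that $x\perp_T y$ forces $y\in x_{T_\theta}^+\cap x_{T_\theta}^-$ for all $\theta$, apply the hypothesis, and conclude via Proposition~\ref{basic}(ii)), and the necessity argument likewise invokes Proposition~\ref{symm} to produce $\lambda$ and sets $\phi_0=\arg\lambda$. Your explicit computation $(T_\theta x,y)=\Re\bigl(e^{-i\theta}(Tx,y)\bigr)=|\lambda|\,(T_{\theta-\phi_0}y,x)$ is slightly more transparent than the paper's appeal to Proposition~\ref{basic}(iv), but the content is the same.
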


\begin{proof}
	Let us first prove the sufficient part of the theorem. Let $y \in \mathbb{X}$ be such that $(Tx, y) = 0.$ This implies that $y \in x_{T_\theta}^+ \cap x_{T_\theta}^-,$ for all $ \theta \in [0, 2\pi).$ Then there exists $ \phi_0 \in [0, 2\pi)$ such that $x \in y_{T_{\theta - \phi_0}}^+ \cap y_{T_{\theta - \phi_0}}^-,$ i.e., $x \in y_{T_\theta}^+ \cap y_{T_\theta}^-.$ Therefore, $(Ty, x) = 0.$ This proves the sufficient part. 
	We next prove the necessary part.  If $Tx = 0 $ then the result holds trivially. Assume $Tx \neq 0.$ 
	Then by Proposition \ref{symm},  there exists a scalar $\lambda $ such that $(Tx,u) = \lambda(Tu,x),$ for all $u \in \mathbb{X}.$  Let $ y \in x^+_{T_\theta}.$ Then $ ( T_\theta x, y ) \geq 0 $ and by using Proposition \ref{basic} (iv) we get, $   ( T_{\theta- \phi_0}y,x)  \geq 0,$  where $ \lambda = \mid \lambda \mid e^{i \phi_0}.$ This shows that $ x \in y^+_{T_{\theta - \phi_0}}.$ Similarly we can show that $  y \in x_{T_\theta}^- \implies x \in y_{T_{\theta -\phi_0 }}^-.$ This completes the necessary part.
	
\end{proof}

The following corollary is an immediate consequence of the last theorem.

\begin{corollary}
	Let $\mathbb{X}$ be a real Banach space and let $T\in \mathbb{L}(\mathbb{X}, \mathbb{X}^*).$ Suppose that $x \in \mathbb{X}.$  Then $\perp_T$ is left symmetric at $x$ if and only if either of the following hold true:
	\begin{enumerate}[label=\upshape(\roman*), leftmargin=*, widest=iii]
	\item $y \in x_T^+ \implies x \in y_T^+$ and $y \in x_T^- \implies x \in y_T^-.$
	\item $y \in x_T^+ \implies x \in y_T^-$ and $y \in x_T^- \implies x \in y_T^+.$	
	\end{enumerate}

\end{corollary}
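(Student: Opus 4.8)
The plan is to specialize Theorem \ref{left symmetric} to the real case, where the only meaningful directions are $\theta = 0$ and $\theta = \pi$, and to observe that these two directions give, up to sign, the same orthogonality relation. Concretely, in a real Banach space the quantity $(T_\theta x, y) = \cos\theta\,(Tx,y)$ collapses: for every $\theta$ we have $(T_\theta x, y) = (\cos\theta)\,(Tx,y)$, so $y \in x_{T_\theta}^+$ means $(\cos\theta)(Tx,y) \ge 0$. Thus there are essentially only two cones, $x_T^+$ (when $\cos\theta > 0$) and $x_T^-$ (when $\cos\theta < 0$), with the degenerate case $\cos\theta = 0$ giving all of $\mathbb{X}$.

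First I would invoke Theorem \ref{left symmetric}: $\perp_T$ is left symmetric at $x$ if and only if there is $\phi_0 \in [0,2\pi)$ with $y \in x_{T_\theta}^+ \Rightarrow x \in y_{T_{\theta-\phi_0}}^+$ and $y \in x_{T_\theta}^- \Rightarrow x \in y_{T_{\theta-\phi_0}}^-$ for every $\theta$. In the real setting this $\phi_0$ can only influence the sign of $\cos(\theta - \phi_0)$ relative to $\cos\theta$; since the map $\theta \mapsto \mathrm{sign}(\cos\theta)$ on $[0,2\pi)$ takes the value $+1$ on $(-\pi/2,\pi/2)$ and $-1$ on $(\pi/2,3\pi/2)$, shifting by $\phi_0$ either preserves this sign pattern (when $\phi_0 \in (-\pi/2,\pi/2) \pmod{2\pi}$, which after collapsing I can normalize to $\phi_0 = 0$) or flips it (when $\phi_0$ is near $\pi$, normalized to $\phi_0 = \pi$). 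Taking $\theta = 0$: the condition $y \in x_T^+ \Rightarrow x \in y_{T_{-\phi_0}}^+$ becomes $x \in y_T^+$ in the first case and $x \in y_T^-$ in the second; similarly for the minus cones. This yields precisely alternatives (i) and (ii). For the converse direction, I would check that each of (i), (ii) forces the full family of implications in Theorem \ref{left symmetric} by taking $\phi_0 = 0$ or $\phi_0 = \pi$ respectively and using that $y \in x_{T_\theta}^\pm$ is, for each fixed $\theta$, either $y \in x_T^+$, $y \in x_T^-$, or vacuous.

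The main obstacle — really a bookkeeping point rather than a genuine difficulty — is being careful with the degenerate directions where $\cos\theta = 0$, i.e.\ $\theta = \pi/2$ or $3\pi/2$: there $x_{T_\theta}^+ = x_{T_\theta}^- = \mathbb{X}$, so the hypothesis $y \in x_{T_\theta}^+$ is vacuous and the corresponding conclusion $x \in y_{T_{\theta-\phi_0}}^+$ must also hold trivially, which it does since $\cos(\theta - \phi_0) = 0$ too when $\phi_0 \in \{0,\pi\}$. One must confirm that no information is lost or spuriously gained at these angles, and that the sign analysis of $\mathrm{sign}(\cos(\theta-\phi_0))$ versus $\mathrm{sign}(\cos\theta)$ genuinely produces only the two global patterns claimed. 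Once that is dispatched, the corollary follows directly; I expect the whole argument to be a short paragraph invoking Theorem \ref{left symmetric} and the real-scalar simplification $(T_\theta x, y) = \cos\theta\,(Tx,y)$.
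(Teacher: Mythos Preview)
Your approach is essentially correct but differs from the paper's. The paper does \emph{not} deduce the corollary from Theorem~\ref{left symmetric}; instead it bypasses the directional machinery entirely and goes straight back to Proposition~\ref{symm}: if $Tx=0$ both (i) and (ii) hold trivially, and if $Tx\neq 0$ then Proposition~\ref{symm} supplies a \emph{real} scalar $\lambda$ with $(Tx,y)=\lambda(Ty,x)$ for all $y$, whence (i) holds when $\lambda\ge 0$ and (ii) when $\lambda\le 0$. The sufficiency is then immediate from the definition. Your route via Theorem~\ref{left symmetric} is the ``expected'' one given how the corollary is announced, and it works, but it is a detour: the $\phi_0$ you extract from that theorem ultimately encodes nothing more than the sign of the $\lambda$ from Proposition~\ref{symm}, so unpacking $\phi_0$ and then repacking it into the sign dichotomy is extra labour.

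There is one genuine loose end in your outline: you dichotomize on whether $\phi_0$ lies near $0$ or near $\pi$, but you never handle $\phi_0\in\{\pi/2,3\pi/2\}$, where $\cos\phi_0=0$. Your ``bookkeeping'' paragraph worries about $\cos\theta=0$, which is the wrong degeneracy; the case to dispatch is $\cos\phi_0=0$. In that case, choosing $\theta=\phi_0$ makes the hypothesis $y\in x_{T_\theta}^{+}$ vacuous while the conclusion forces $(Ty,x)\ge 0$ for all $y$, and the minus version forces $(Ty,x)\le 0$, so $(Ty,x)=0$ for all $y$ and both (i) and (ii) hold trivially. Once you add that sentence your argument is complete; the paper's direct use of Proposition~\ref{symm} simply avoids this case analysis altogether.
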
 

\begin{proof}
	We first prove the necessary part.  If $Tx =0 $ then for each $ y \in \mathbb{X}, \, ( Tx,y) =0 $ and by left symmetricity of $\perp_T$ at $x$ we get $(Ty,x) =0.$ Thus both (i) and (ii) hold trivially. Next assume $Tx \neq 0.$ Then by Proposition \ref{symm}, there exists a real scalar $\lambda$ such that $ (Tx, y) = \lambda(Ty, x), $ for all $y \in \mathbb{X}.$  Thus (i) or (ii) holds accordingly as  $ \lambda \geq 0$ or $\lambda \leq 0.$  This completes the proof of necessary part. 
	The sufficient part follows easily from the definition of left symmetricity of $\perp_T$ at $x$. 
\end{proof}

We  say ``$ \perp_T$ is left symmetric in $\mathbb{X}$''  if $ \perp_T$ is left symmetric at $x,$ for each $x \in \mathbb{X}.$ We note the following remark in this regard.

\begin{remark}
	Let $\mathbb{X}$ be a complex Banach space and let $T\in \mathbb{L}(\mathbb{X}, \mathbb{X}^*).$ Using Theorem \ref{nonisotropic; isotropic} together with Theorem \ref{left symmetric}, it can be concluded that $ \perp_T$ is left symmetric in $ \mathbb{X}$ if and only if either of the following holds true:
	\begin{enumerate}[label=\upshape(\roman*), leftmargin=*, widest=iii]
		\item  $y \in x_{T_{\theta}}^+ \implies x \in y_{T_{\theta}}^+$ and $y \in x_{T_\theta}^- \implies x \in y_{T_\theta}^-.$
	\item $y \in x_{T_\theta}^+ \implies x \in y_{T_\theta}^-$ and $y \in x_{T_\theta}^- \implies x \in y_{T_\theta}^+,$ for every $ \theta \in [0, 2\pi).$ 
	\end{enumerate}
	On a more precise note we can observe that if there exists a nonisotropic vector in $ \mathbb{X}$ then $ \perp_T$ is left symmetric in $\mathbb{X}$ if and only if $y \in x_{T_{\theta}}^+ \implies x \in y_{T_{\theta}}^+$ and $y \in x_{T_\theta}^- \implies x \in y_{T_\theta}^-,$ whereas if all the vectors in $\mathbb{X}$ are isotropic then $\perp_T$ is left symmetric in $\mathbb{X}$ if and only if $y \in x_{T_\theta}^+ \implies x \in y_{T_\theta}^-$ and $y \in x_{T_\theta}^- \implies x \in y_{T_\theta}^+,$ for every $ \theta \in [0, 2\pi),$ for every $ \theta \in [0, 2 \pi).$	
	
\end{remark}

Let us note that $ x^{\perp} = \{y \in \mathbb{X} : x \perp_B y\}$ and  $x^{\perp_T} =  \{y \in \mathbb{X} : x \perp_T y\}. $

\begin{proposition} \label{prop}
	Let $ \mathbb{X}$ be a complex Banach space and let $ T \in \mathbb{L}(\mathbb{X}, \mathbb{X}^*)$ be bijective. Then $ \mathbb{X}$ is reflexive if and only if for any nonzero $ x \in \mathbb{X},$ there exist $ z \in \mathbb{X}$ such that $ z \perp_B x^{\perp_T}.$
\end{proposition}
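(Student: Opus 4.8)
The plan is to reduce the statement to a known characterization of reflexivity via Birkhoff–James orthogonality, using the bijectivity of $T$ to translate between the functional side and the vector side. Recall James's theorem: $\mathbb{X}$ is reflexive if and only if for every $f \in \mathbb{X}^*$ there exists $z \in S_{\mathbb{X}}$ with $f(z) = \|f\|$, equivalently (by the standard reformulation) if and only if for every hyperplane $H = \ker f$ there exists $z \in \mathbb{X}$ with $z \perp_B H$, i.e.\ $z \perp_B y$ for all $y \in H$. So the whole argument hinges on identifying $x^{\perp_T}$ with a hyperplane of the form $\ker f$ for a suitable $f$, and checking that as $x$ ranges over nonzero vectors, $f$ ranges over all nonzero functionals.

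First I would observe that $x^{\perp_T} = \{y \in \mathbb{X} : (Tx, y) = 0\} = \ker(Tx)$, which is precisely the kernel of the functional $Tx \in \mathbb{X}^*$. If $x \neq 0$ then, since $T$ is bijective, $Tx \neq 0$, so $x^{\perp_T}$ is a genuine closed hyperplane. Conversely, given any nonzero $f \in \mathbb{X}^*$, bijectivity (in particular surjectivity) of $T$ yields a nonzero $x \in \mathbb{X}$ with $Tx = f$, so $\ker f = x^{\perp_T}$. Hence the collection $\{x^{\perp_T} : 0 \neq x \in \mathbb{X}\}$ is exactly the collection of all hyperplanes $\{\ker f : 0 \neq f \in \mathbb{X}^*\}$.

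With that correspondence in hand, the statement ``for any nonzero $x \in \mathbb{X}$ there exists $z \in \mathbb{X}$ with $z \perp_B x^{\perp_T}$'' becomes verbatim ``for every nonzero $f \in \mathbb{X}^*$ there exists $z$ with $z \perp_B \ker f$.'' I would then invoke James's characterization: the existence, for each closed hyperplane $H$, of a vector Birkhoff–James orthogonal to $H$ is equivalent to every functional attaining its norm on $B_{\mathbb{X}}$, which is equivalent to reflexivity. For the direction ``$z \perp_B \ker f \Rightarrow f$ attains its norm'': if $z \perp_B \ker f$ and $z \notin \ker f$, write any $u \in S_{\mathbb{X}}$ as $u = \alpha z + v$ with $v \in \ker f$; then $\|z\| \le \|z + \lambda v\|$ for all scalars $\lambda$ forces, after normalizing, that $|f(\hat z)| = \|f\|$ where $\hat z = z/\|z\|$ — this is the classical computation relating $\perp_B$ to a hyperplane and norm attainment. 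Conversely, if $f$ attains its norm at $\hat z \in S_{\mathbb{X}}$, then $\hat z \perp_B \ker f$. So the two sides match, and reflexivity follows in both directions from James's theorem.

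The main obstacle — really the only nontrivial point — is making the passage ``$z \perp_B \ker f$ for each hyperplane $\iff$ reflexive'' fully rigorous, since this is where James's deep theorem enters; one must be careful that $z \perp_B \ker f$ with $z \in \ker f$ is vacuous, so the useful case is $z \notin \ker f$, and one should note that if \emph{every} candidate $z$ lay in $\ker f$ the condition would be empty, but the hypothesis asserts existence of a genuine such $z$, which then must have $f(z) \neq 0$ (otherwise $z \perp_B \ker f$ with $z \in \ker f$ gives no information and in fact one needs $z \perp_B \mathbb{X}$ is impossible for $z\neq 0$; more precisely the hypothesis is only meaningful and is used with $z\notin\ker f$). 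I would address this by restating the Birkhoff–James/norm-attainment lemma cleanly first and then applying it. Everything else is the bookkeeping of the $Tx \leftrightarrow f$ dictionary, which is immediate from bijectivity of $T$.
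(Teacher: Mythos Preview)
Your proposal is correct and follows essentially the same route as the paper: identify $x^{\perp_T}=\ker(Tx)$, use bijectivity of $T$ to see that these kernels exhaust all closed hyperplanes, and then invoke James's characterization of reflexivity via the existence of a vector Birkhoff--James orthogonal to each hyperplane. The paper's version is terser (it simply cites James rather than unpacking the norm-attainment equivalence), and your caveat about needing $z\neq 0$ is well taken---the statement should indeed be read with a nonzero $z$, which both you and the paper leave implicit.
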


\begin{proof}
	It is easy to observe that $ x ^{\perp_T} $ is a hyperspace of $ \mathbb{X}.$ Since $ \mathbb{X} $ is reflexive, there exist a $ z \in \mathbb{X}$ such that $ z \perp_B x^{\perp_T}.$ Conversely, Let $ H$ be a hyperspace in $ \mathbb{X}.$ Consider $ f \in \mathbb{X}^*$ as the corresponding functional of $H$ such that $ ker~f = H.$ As $T$ is bijective, let $ T(w)= f,$ for some $ w\in \mathbb{X}.$ Clearly, $w^{\perp_T} = H.$ Then there exist  $ v \in \mathbb{X}$ such that $ v \perp_B H.$ Then from \cite{Ja} we conclude that  $ \mathbb{X}$ is reflexive. 
\end{proof}

\begin{theorem} \label{smooth; strict convex}
	Let $\mathbb{X}$ be a complex Banach space and let $T\in \mathbb{L}(\mathbb{X}, \mathbb{X}^*)$ be bijective. Then we have the followings:
	\begin{enumerate}[label=\upshape(\roman*), leftmargin=*, widest=iii]
	\item A nonzero element $x \in \mathbb{X}$ is smooth if and only if there exists a unique (upto scalar multiplication) $z \in \mathbb{X}$ such that $z^{\perp_T} = x^{\perp}.$ 
	 \item The space $\mathbb{X}$ is strictly convex if and only if for every $x \in \mathbb{X},$ there exists atmost one (upto scalar multiplication) $z \in S_\mathbb{X}$ such that $z \perp_B x^{\perp_T}.$ 
	\end{enumerate}

\end{theorem}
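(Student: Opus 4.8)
\textbf{Proof proposal for Theorem \ref{smooth; strict convex}.}

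The plan is to translate each geometric condition on $\mathbb{X}$ into a statement about hyperplanes through the origin, using the bijectivity of $T$ to pass between functionals in $\mathbb{X}^*$ and the sets $z^{\perp_T}$. The key observation, already implicit in Proposition \ref{prop}, is that for any nonzero $z \in \mathbb{X}$ the set $z^{\perp_T} = \{y : Tz(y) = 0\}$ is precisely $\ker(Tz)$, a hyperspace of $\mathbb{X}$; and since $T$ is a bijection, every hyperspace of $\mathbb{X}$ arises as $z^{\perp_T}$ for some $z$, with $z$ determined up to scalar multiplication (because $\ker(Tz_1) = \ker(Tz_2)$ forces $Tz_1 = c\,Tz_2$ for a scalar $c$, hence $z_1 = c z_2$ by injectivity of $T$).

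For (i), I would argue as follows. By James's characterization, a nonzero $x$ is smooth if and only if there is a unique hyperspace $H$ with $x \perp_B H$, equivalently a unique (up to scalar) norming functional $f \in \mathbb{X}^*$ with $f(x) = \|f\|\|x\|$ and $\ker f = x^{\perp}$. Using the dictionary above: if $x$ is smooth, let $f$ be this unique functional and write $f = Tz$ for the unique-up-to-scalar $z$; then $z^{\perp_T} = \ker(Tz) = \ker f = x^{\perp}$, and uniqueness of such $z$ follows from uniqueness of $f$ together with injectivity of $T$. Conversely, if there is a unique-up-to-scalar $z$ with $z^{\perp_T} = x^{\perp}$, then $\ker(Tz) = x^{\perp}$, so $Tz$ is (a scalar multiple of) the functional defining the hyperplane $x^{\perp}$; uniqueness of $z$ up to scalars transfers, via bijectivity of $T$, to uniqueness of this functional up to scalars, which is exactly smoothness of $x$ by James. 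One subtlety to handle carefully: $x^{\perp}$ need not be a hyperspace a priori if $x$ admits more than one norming functional — but in that case $x$ is non-smooth and one can directly exhibit two non-proportional $z$'s, so the equivalence still holds; I would phrase the argument so that both directions of the iff are clean.

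For (ii), I would use the James characterization that $\mathbb{X}$ is strictly convex if and only if for every hyperspace $H$ of $\mathbb{X}$ there is \emph{at most one} $z \in S_{\mathbb{X}}$ (up to sign/scalar on the sphere) with $z \perp_B H$; equivalently every functional attains its norm at a unique point of the unit sphere. Given bijectivity of $T$, every hyperspace is $w^{\perp_T}$ for some $w$, so the condition ``for every $x \in \mathbb{X}$ there is at most one $z \in S_{\mathbb{X}}$ with $z \perp_B x^{\perp_T}$'' is just a restatement of ``for every hyperspace $H$ there is at most one $z \in S_{\mathbb{X}}$ with $z \perp_B H$'', since as $x$ ranges over $\mathbb{X}\setminus\{0\}$ the hyperspaces $x^{\perp_T}$ range over all hyperspaces. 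The forward and reverse implications then both reduce to citing \cite{Ja} (or \cite{J}) and the surjectivity/injectivity of $T$.

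The main obstacle I anticipate is not any deep point but the bookkeeping around degenerate cases and the precise meaning of ``up to scalar multiplication'': making sure that $x^{\perp}$ (the Birkhoff--James orthogonal set of $x$) really is a hyperspace exactly when $x$ is smooth, and that the non-smooth case is correctly accounted for on both sides of the equivalence in (i); and in (ii), being careful that ``at most one up to scalar in $S_{\mathbb{X}}$'' correctly matches the strict-convexity characterization (where the relevant non-uniqueness is a line segment on the sphere). Both of these are handled by invoking James's results \cite{J,Ja} and the elementary fact that $T$ bijective gives a bijection between the one-dimensional subspaces of $\mathbb{X}$ and the hyperspaces of $\mathbb{X}$ via $z \mapsto \ker(Tz)$.
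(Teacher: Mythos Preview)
Your approach is essentially the paper's: both parts translate via the bijection $z \mapsto \ker(Tz)$ between lines in $\mathbb{X}$ and hyperspaces of $\mathbb{X}$, then invoke James's characterizations of smoothness and strict convexity. The paper's converse for (i) is phrased through right-additivity of $\perp_B$ at $x$ (if $x^{\perp} = z^{\perp_T}$ then $x^{\perp}$ is a subspace, so $x$ is smooth by \cite{J}), and its proof of (ii) is phrased through the norm-attaining set $M_{Tx}$; but these are the same ideas you describe.

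One small slip worth fixing: in your handling of the non-smooth case in (i) you say one can ``directly exhibit two non-proportional $z$'s'' with $z^{\perp_T} = x^{\perp}$. That is backwards. If $x$ is not smooth then $x^{\perp}$ is not a linear subspace, so \emph{no} $z$ can satisfy $z^{\perp_T} = x^{\perp}$, since $z^{\perp_T} = \ker(Tz)$ is always a hyperspace. The biconditional still holds, but the ``there exists a unique $z$'' condition fails in the non-smooth case because the \emph{existence} clause fails, not the \emph{uniqueness} clause. When you write up the converse direction cleanly, the simplest route is the paper's: assuming some $z$ exists with $z^{\perp_T} = x^{\perp}$ already forces $x^{\perp}$ to be a hyperspace, hence $\perp_B$ is right-additive at $x$, hence $x$ is smooth.
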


\begin{proof}
	$(i)$ Let $y \in x^{\perp}.$ Since $x$ is smooth, $J(x) = \{x^*\}$ and $x^*(y)=0.$ We note that $T$ is bijective and therefore, there exists $z \in \mathbb{X}$ such that $Tz = x^*.$ This implies that $z \perp_T y.$ Thus we obtain $ x^{\perp} \subset z^{\perp_T}.$ Observe that $ x^{\perp} = ker x^* = z ^{\perp_T} = (\lambda z)^{\perp_T}.$ Therefore, there exists a unique (upto scalar multiplication) $z \in \mathbb{X}$ such that $ z^{\perp_T} = x^{\perp}.$
	Conversely, suppose that $x$ is a nonzero element of $\mathbb{X}.$ Let $ x \perp_B u$ and $ x \perp_B v,$ for some $ u, v \in \mathbb{X}.$ Then there exists a unique $ z \in \mathbb{X}$ such that $ z \perp_T u$ and $ z \perp_T v.$ As $ Tz \in \mathbb{X}^*,$ $ z \perp_T u+v,$ which implies $ x \perp_B u+v.$ Thus from \cite{J}, we obtain that $ x $ is a smooth point.\\
	
	$(ii)$ Let $ \mathbb{X}$ be strictly convex and let $ x\in \mathbb{X}$ be a nonzero element. Then $ M_{Tx} = \emptyset$ or $ M_{Tx} = \{e^{i\theta}z\},$ where $ z \in S_{\mathbb{X}}$ and $ \theta \in [0, 2\pi).$ It is easy to verify that when $ M_{Tx} = \emptyset,$ there exists no such $ z \in S_{\mathbb{X}}$ such that $ z\perp_B x^{\perp_T}.$ Let $ M_{Tx} = \{e^{i\theta}z\}.$ This implies that $ |Tx(z)| = \|Tx\|,$ i.e., $ \mu \frac{Tx}{\|Tx\|} \in J(z),$ for some $\mu \in S^1.$ Thus for any $ y \in x^{\perp_T},$ we get $ z \perp_B y.$ In other words, $ z \perp_B x^{\perp_T}.$ Now if we assume that $ z' \in S_{\mathbb{X}}$ with $ z' \neq \gamma z,$ where $ \gamma \in S^1$ such that $ z' \perp_B x^{\perp_T} $ then one can easily show that $ z' \in M_{Tx}.$ This leads to a contradiction that $ \mathbb{X}$ is strictly convex.
	
	Conversely, Suppose on the contrary that $ \mathbb{X}$ is not strictly convex. Then there exists $ u, v (u \neq \lambda v) \in S_{\mathbb{X}}$ such that $ J(u) \cap J(v) \neq \emptyset,$ where $ \lambda \in S^1.$ Let us take $ f \in J(u) \cap J(v)$ and let $ Tw = f,$ for some $ w \in \mathbb{X}.$ Clearly, $ w^{\perp_T} = ker f.$ Therefore, it can be readily seen that $ u \perp_B w^{\perp_T}$ as well as $ v \perp_B w^{\perp_T}.$ This contradicts our hypothesis and proves our theorem.
\end{proof}

\begin{xrem}
	Let $ \mathbb{X}$ be a complex Banach space and let $ T \in \mathbb{L}(\mathbb{X}, \mathbb{X}^*)$ be bijective. From Theorem \ref{smooth; strict convex}(ii) and  Proposition \ref{prop}, we can conclude that if $\mathbb{X}$ is reflexive and strictly convex then for any $x \in \mathbb{X},$ there exists a unique (upto scalar multiplication) $z \in S_\mathbb{X}$ such that $ z \perp_B  x^{\perp_T}$ whereas if $ \mathbb{X}$ is non-reflexive and strictly convex then there exists a $ w \in \mathbb{X}$ such that for no $z \in \mathbb{X},$ $ z \perp_B w^{\perp_T}$ holds true.
\end{xrem}

For a given Banach space $\mathbb{X}$ it is a natural question to ask which operators preserves $T$-orthogonality. In \cite{Kold} it is proved that a linear operator on $\mathbb{X}$ preserves `$\perp_B$' if the operator is a positive multiple of an isometry.  We next characterize the operators preserving `$\perp_T$'.

\begin{theorem}
	Let $\mathbb{X}$ be a complex Banach space and let $T\in \mathbb{L}(\mathbb{X}, \mathbb{X}^*)$ be bijective.  Suppose that $ A \in \mathbb{L}(\mathbb{X}).$ Then for any $ x, y \in \mathbb{X},$ $ x \perp_T y \iff Ax \perp_T Ay$ if and only if $A$ is a scalar multiple of $T$-isometry.
\end{theorem}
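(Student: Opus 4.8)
The plan is to prove both implications directly, using the bijectivity of $T$ throughout. The backward direction is the easy one: if $A = cB$ where $B$ is a $T$-isometry (so $T = B^*TB$), then for $x,y \in \mathbb{X}$ one computes $(TAx, Ay) = (T(cBx), cBy) = |c|^2 (TBx, By) = |c|^2 (B^*TBx, y) = |c|^2 (Tx, y)$, where in the complex case one should be slightly careful about conjugate-linearity of the adjoint — but since $T = A^*TA$ is the stated definition of $T$-isometry and the scalar $c$ contributes $|c|^2$, the sign of $(Tx,y)$ and in particular its vanishing is unchanged. Hence $x \perp_T y \iff Ax \perp_T Ay$. (If $A$ is a scalar multiple of a $T$-isometry it must in particular be injective, a point worth recording.)

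The forward direction is the substantial part. Assume $x \perp_T y \iff Ax \perp_T Ay$ for all $x,y$. First I would observe that $A$ must be injective: if $Ax = 0$ then $Ax \perp_T Az$ for every $z$, so $x \perp_T z$ for every $z$, i.e. $Tx = 0$, and bijectivity of $T$ forces $x = 0$. The key structural step is to relate $A^*TA$ to $T$ as functionals. Fix $x \in \mathbb{X}$, $x \neq 0$. The hypothesis says $\ker(Tx) = \ker\big((A^*TA)x\big)$ as subspaces of $\mathbb{X}$ (here $(A^*TA)x = A^*(T(Ax))$, the functional $y \mapsto (TAx, Ay)$). Since $Tx \neq 0$ (as $T$ is injective and $x \neq 0$), and two nonzero functionals with the same kernel are scalar multiples of each other, there is a nonzero scalar $\mu(x) \in \mathbb{C}$ with $A^*TAx = \mu(x)\, Tx$.

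The remaining work is to show $\mu(x)$ is a constant $\mu$ independent of $x$ and that this constant is positive (or can be absorbed). Independence follows from a standard linearity/bilinearity argument: first, for $x$ and $\lambda x$ one gets $\mu(\lambda x)\,\lambda = \mu(x)\,\lambda$ so $\mu$ is constant on each line; then, for linearly independent $x_1, x_2$, expanding $A^*TA(x_1+x_2) = \mu(x_1+x_2)T(x_1+x_2)$ and comparing with $\mu(x_1)Tx_1 + \mu(x_2)Tx_2$, and using that $Tx_1, Tx_2$ are linearly independent (because $T$ is injective), forces $\mu(x_1) = \mu(x_2) = \mu(x_1+x_2)$. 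So $A^*TA = \mu T$ for a single nonzero scalar $\mu$. To see $\mu > 0$: evaluate at a nonisotropic vector if one exists, noting $\mu = (TAx,Ax)/(Tx,x)$; more robustly, observe that for real scalars the sign is pinned down by the $x_T^\pm$ half-space structure — the equivalence $x \perp_{T_\theta} y \iff Ax \perp_{T_\theta} Ay$ should be extracted first from $\perp_T$-preservation via Proposition \ref{basic}(ii), and then $y \in x_{T_\theta}^+ \iff Ay \in (Ax)_{T_\theta}^+$ (by a connectedness/continuity argument in $y$, since flipping sign is a closed-and-open condition on the complement of the orthogonality set) shows $\mu$ preserves signs, hence $\mu > 0$. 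Writing $\mu = c^2$ with $c > 0$ and $B = A$, we get $T = (B/c)^*T(B/c)$, i.e. $A/c$ is a $T$-isometry, so $A$ is a scalar multiple of a $T$-isometry.

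I expect the main obstacle to be the positivity of $\mu$ (equivalently, ruling out that $A$ "reverses" the $T$-orthogonality half-spaces): the bare kernel condition only gives $\mu$ up to a nonzero scalar, and one genuinely needs the directional/half-space refinement $\perp_{T_\theta}$ together with a sign-continuity argument — or the existence of a nonisotropic vector on which $(Tx,x)$ and $(TAx,Ax)$ can be compared directly — to upgrade this to $\mu > 0$. A secondary technical point is handling the complex conjugation in $A^*$ consistently; phrasing everything in terms of the definition $T = A^*TA$ and the functionals $x \mapsto (TAx,A\,\cdot\,)$ sidesteps any ambiguity.
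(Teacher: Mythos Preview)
Your core argument---matching kernels to get $A^*TAx = \mu(x)\,Tx$ and then showing $\mu$ is constant via the independence-of-$Tx_1,Tx_2$ trick---is correct and is exactly what the paper does. The paper simply stops at $T = \beta\, A^*TA$ for some nonzero $\beta \in \mathbb{C}$ and declares this equivalent to ``$A$ is a scalar multiple of a $T$-isometry.''

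Where you diverge is in what you call the ``main obstacle,'' namely the positivity of $\mu$. This is a non-issue: we are over $\mathbb{C}$, and the Banach-space adjoint is \emph{linear} (not conjugate-linear), so $(cB)^*T(cB) = c^2\,B^*TB$. Thus any nonzero $\mu \in \mathbb{C}$ admits $c$ with $c^2 = \mu$, and $B = A/c$ is a $T$-isometry. No sign or half-space argument is needed. (This same linearity means your backward-direction computation should read $c^2$, not $|c|^2$; the pairing $(Tx,y) = Tx(y)$ is bilinear.) Incidentally, the half-space argument you sketch would not work as written: from $A^*TA = \mu T$ with $\mu = |\mu|e^{i\phi}$ one gets $(T_\theta Ax, Ay) = |\mu|\,(T_{\theta-\phi}x,y)$, a phase shift rather than sign preservation, and Proposition~\ref{basic}(ii) goes the wrong direction to extract $\perp_{T_\theta}$-preservation from $\perp_T$-preservation. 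So it is fortunate that none of this is required.
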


\begin{proof}
	First we prove the necessary part of the theorem. Suppose that $ x \perp_T y \iff Ax \perp_T Ay,$ for all $x , y \in \mathbb{X}.$  This implies that for any nonzero $ x \in \mathbb{X},$ $ (Tx, y) = 0 \implies (TAx, Ay) = 0.$ This gives us $ (TAx, Ay) = 0 \implies (A^*TAx, y) =0.$ Therefore, $ y \in ker Tx$ implies $ y \in ker A^*TAx,$ for every $ y \in \mathbb{X}.$ Since $T$ is bijective, for all nonzero $x \in \mathbb{X},$  we obtain $ ker Tx = ker A^*TAx,$ i.e., $Tx = \beta_x A^*TAx,$ for some $ \beta_x \in \mathbb{C}.$  Now we claim that $\beta_x$ is a fixed constant. Suppose that $u, v \in \mathbb{X}$ be two arbitrary nonzero vector. It is easy to check that if $v = \alpha u,$ where $\alpha \in \mathbb{C}$ then $Tv = \beta_u A^*TAv. $ Let $ u, v$ be independent. Then
	\begin{equation*}
		\begin{aligned}
		Tu +Tv &=T(u+v)\\
	           &= \beta_{u+v} A^*TA(u+v) \\ 
		       &= \beta_{u+v} A^*TAu + \beta_{u+v} A^*TAv \\
		       &= \frac{\beta_{u+v}}{\beta_u} Tu + \frac{\beta_{u+v}}{\beta_v} Tv.
	\end{aligned}
	\end{equation*} 
	As $T$ is bijective and $u, v$ are linearly independent, $\beta_u = \beta_v = \beta_{u+v}.$  So our claim is established. Therefore, we obtain that $T = \beta A^*TA,$ as desired.\\
	To prove the sufficient part of the theorem let us assume that $ T = \lambda A^*TA,$ for some $\lambda \in \mathbb{C}.$ Then for any $ x, y \in \mathbb{X},$ with $x \perp_T y$ we get 
	\[
	(Tx, y) = 0 
	\iff (\lambda A^*TAx, y) = 0 
	\iff \lambda (TAx, Ay) = 0
	\iff Ax \perp_T Ay.
	\]
	Hence our theorem is proved.
\end{proof}

Now in case of real Banach spaces with $dim\mathbb{X} \geq 3,$ we are going to show that $ T$-orthogonality and Birkhoff-James orthogonality coincides only in Hilbert spaces.

\begin{theorem}
	Let $\mathbb{X}$ be a real Banach space and let $dim \, \mathbb{X} \geq 3.$ Then $\mathbb{X}$ is a Hilbert space if and only if there exists a  $ T \in \mathbb{L}(\mathbb{X}, \mathbb{X}^*)$ such that $\perp_T = \perp_B.$ 
\end{theorem}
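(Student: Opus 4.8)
The plan is to treat the two implications separately, the forward one being immediate and the converse one carrying all the weight.

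For the forward implication, if $\mathbb{X}$ is a Hilbert space I take $T\colon\mathbb{X}\to\mathbb{X}^*$ to be the Riesz map $Tx=\langle\,\cdot\,,x\rangle$. Then $x\perp_T y\Leftrightarrow\langle y,x\rangle=0\Leftrightarrow x\perp_B y$, so $\perp_T=\perp_B$. For the converse, suppose $T\in\mathbb{L}(\mathbb{X},\mathbb{X}^*)$ satisfies $\perp_T=\perp_B$. I would first extract the elementary structural facts. If $Tx_0=0$ for some $x_0\neq 0$ then $x_0\perp_T y$, hence $x_0\perp_B y$, for every $y$; in particular $x_0\perp_B x_0$, which is impossible, so $T$ is injective. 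For $x\neq 0$ the set $x^{\perp_B}=x^{\perp_T}=\ker(Tx)$ is then a closed hyperplane, and a nonzero vector whose Birkhoff--James orthogonal set is a hyperplane is a smooth point, so $\mathbb{X}$ is smooth. Writing $f_x$ for the unique norm-attaining functional at $x$, one gets $Tx=\phi(x)f_x$ with $\phi(x)\neq 0$; elementary algebra shows $\phi$ is absolutely homogeneous, continuous on $\mathbb{X}\setminus\{0\}$ and (after replacing $T$ by $-T$ if necessary) positive, and that $(Tx,x)=\phi(x)\|x\|^2\neq 0$, i.e.\ no nonzero vector is $T$-isotropic.

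Next comes a reduction to dimension three. For a $3$-dimensional subspace $W\subseteq\mathbb{X}$ put $T_W\in\mathbb{L}(W,W^*)$, $T_Ww=(Tw)|_W$; since Birkhoff--James orthogonality between two vectors is decided inside the subspace they span, $\perp_{T_W}$ coincides with Birkhoff--James orthogonality of $W$. Hence, once the theorem is known for $\dim\mathbb{X}=3$, every $3$-dimensional subspace of $\mathbb{X}$ is a Hilbert space, so the parallelogram law holds on $\mathbb{X}$ and $\mathbb{X}$ is a Hilbert space. So assume $\dim\mathbb{X}=3$ and consider the bounded bilinear form $B(x,y)=(Tx,y)$, which satisfies $B(x,x)=\phi(x)\|x\|^2>0$ for $x\neq 0$. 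Split $B=S+A$ into its symmetric and antisymmetric parts. An antisymmetric bilinear form on a $3$-dimensional space has rank $0$ or $2$, hence a nonzero radical, so there is $x_0\neq 0$ with $(Tx_0,y)=(Ty,x_0)$ for all $y$; thus $\perp_T$ is left symmetric at the nonisotropic vector $x_0$. Since there is a nonisotropic vector, the real analogue of Lemma~\ref{lemma} and Theorem~\ref{nonisotropic; isotropic} then forces $T$ to be symmetric (their proofs carry over, the only arithmetic input being that if $(Tx,x)\neq 0$ and $(Tz,z)=0$ then one of $x\pm z$ is nonisotropic). Consequently $\perp_T=\perp_B$ is symmetric, and since $\dim\mathbb{X}\geq 3$, James' characterization~\cite{J} yields that $\mathbb{X}$ is a Hilbert space.

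I expect the delicate point to be the passage to symmetry of $T$. Injectivity, smoothness and the form $Tx=\phi(x)f_x$ are routine, as are the reduction to $\dim\mathbb{X}=3$ and the final appeal to James' theorem; but one must check that the degeneracy argument for $A$ genuinely uses $\dim\mathbb{X}=3$ (it fails in even or infinite dimension, which is exactly why the reduction is performed first), and that propagating left symmetry at a single nonisotropic point to symmetry of the whole operator — via the real versions of Lemma~\ref{lemma} and Theorem~\ref{nonisotropic; isotropic}, which are stated there for complex spaces — is carried out with care.
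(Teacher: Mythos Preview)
Your forward implication matches the paper's. For the converse, however, the paper's argument is a single observation you overlooked: $\perp_T$ is \emph{always} left additive (if $Tu(w)=0$ and $Tv(w)=0$ then $T(u+v)(w)=Tu(w)+Tv(w)=0$; this is Proposition~\ref{basic}(vi)), so the hypothesis $\perp_T=\perp_B$ immediately makes $\perp_B$ left additive, and James' theorem \cite[Th.~2]{Jb} then yields the inner-product structure. No injectivity of $T$, no smoothness, no reduction to dimension $3$, no bilinear-form decomposition is needed.

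Beyond being far more elaborate, your route contains a genuine gap. The step ``$\perp_T$ is left symmetric at the single nonisotropic vector $x_0$, hence by Lemma~\ref{lemma} and Theorem~\ref{nonisotropic; isotropic} the operator $T$ is symmetric'' misreads Theorem~\ref{nonisotropic; isotropic}: condition~(ii) there requires $\perp_T$ to be left symmetric at \emph{every} nonisotropic vector, since its proof applies Lemma~\ref{lemma} to arbitrary nonisotropic $z$, $w$, and to $x\pm z$. Knowing only that the antisymmetric part $A$ vanishes on one line does not force $A=0$: on $\mathbb{R}^3$ take $S=I$ and $A$ with $A(e_1,e_2)=-A(e_2,e_1)=1$ and $A(\cdot,e_3)=0$; then $B=S+A$ has $B(x,x)>0$ for $x\neq 0$, every vector is nonisotropic, $x_0=e_3$ is in the radical of $A$, yet $T$ is not symmetric. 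This particular $B$ of course does not satisfy $\perp_T=\perp_B$, so the hypothesis might still rescue the argument---but at the point where you invoke Lemma~\ref{lemma} and Theorem~\ref{nonisotropic; isotropic} you are no longer using $\perp_T=\perp_B$, and those two results alone do not suffice.
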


\begin{proof}
	To prove the necessary part we assume $\mathbb{X}$ is a real Hilbert space.  Let us consider the map $T : \mathbb{X} \longrightarrow \mathbb{X}^* $ defined by $ Tx(y) = \langle y, x \rangle$, for each $ y \in \mathbb{X}.$ Then clearly $T$ is a  bounded linear operator. Clearly, $ x \perp_T y \iff x\perp_B y, $ for all $x, y \in \mathbb{X}.$ Therefore, $\perp_T = \perp_B.$\\
	Let us now prove the sufficient part of the theorem. Suppose that $ u,v,w \in \mathbb{X}$ be  such that $ u \perp_B w$ and $ v \perp_B w.$ Since $\perp_T = \perp_B,$  we have $ u \perp_T w$ and $ v \perp_T w.$  Then by left additivity of $\perp_T$, we get  $ u+v \perp_T w$ and so  $ u+v \perp_B T.$ This implies that $ \perp_B$ is left additive. Therefore, from \cite[Th. 2]{Jb} we conclude that $ \mathbb{X}$ is a Hilbert space. 
\end{proof}

\begin{remark} Analogous characterization for Hilbert spaces follows easily:  
	Let $\mathbb{X}$ be a complex Banach space and let $dim \, \mathbb{X} \geq 3.$ Then $\mathbb{X}$ is a Hilbert space if and only if there exists a conjugate linear operator $T : \mathbb{X} \to \mathbb{X}^*$  such that $\perp_T = \perp_B.$ 
\end{remark}

Our next result is characterization of two-dimensional  real Euclidean spaces out of all $\ell_p^2(\mathbb{R}) $ spaces. Before we note the following lemma which will be useful for our next theorem. 
\begin{lemma} \label{lem}
	Let $\mathbb{X} = \ell_p^2(\mathbb{R}),$ where $1 \leq p \leq \infty.$ Then $p=2$  If and only if $ (\alpha, \beta) \perp_B (\beta, -\alpha), $ for all $ (\alpha, \beta) \in \mathbb{X}.$ 
\end{lemma}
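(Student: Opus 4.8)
The plan is to prove both directions of the equivalence, with the ``$\Leftarrow$'' direction being the substantive one. For the easy ``$\Rightarrow$'' direction, I would recall that $\ell_2^2(\mathbb{R})$ is the two-dimensional Euclidean space, so Birkhoff--James orthogonality coincides with the inner-product orthogonality; since $\langle (\alpha,\beta),(\beta,-\alpha)\rangle = \alpha\beta - \beta\alpha = 0$, the condition holds for every $(\alpha,\beta)$.

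For the converse, suppose $(\alpha,\beta) \perp_B (\beta,-\alpha)$ for all $(\alpha,\beta) \in \ell_p^2(\mathbb{R})$. The idea is to exploit the explicit description of Birkhoff--James orthogonality in a smooth two-dimensional space: for $1 < p < \infty$ the space $\ell_p^2(\mathbb{R})$ is smooth, and the supporting functional at a point $(\alpha,\beta) \neq (0,0)$ is (a multiple of) $(|\alpha|^{p-1}\operatorname{sgn}\alpha,\ |\beta|^{p-1}\operatorname{sgn}\beta)$. Hence $(\alpha,\beta) \perp_B (\beta,-\alpha)$ forces
\[
|\alpha|^{p-1}\operatorname{sgn}(\alpha)\,\beta - |\beta|^{p-1}\operatorname{sgn}(\beta)\,\alpha = 0
\]
for all $\alpha,\beta$. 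Plugging in a single convenient pair, say $\alpha=1$, $\beta=2$, gives $2 - 2^{p-1} = 0$, i.e.\ $2^{p-1}=2$, which yields $p=2$. One still has to dispose of the endpoint cases $p=1$ and $p=\infty$: there the space is not smooth, but one can simply test the condition against one explicit vector (for instance $(1,1)$ in $\ell_\infty^2$, whose Birkhoff--James orthogonal set does not contain $(1,-1)$, and similarly for $\ell_1^2$) to rule them out directly.

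The main obstacle I anticipate is organizational rather than deep: handling the non-smooth endpoint cases $p=1,\infty$ separately and making sure the supporting-functional computation for $1<p<\infty$ is stated correctly (up to normalization and the sign conventions in $\operatorname{sgn}$). A clean way to avoid case analysis altogether is to observe that the hypothesis, specialized to $(\alpha,\beta)=(1,2)$, already says $(1,2) \perp_B (2,-1)$, and then to compute directly in each $\ell_p^2$ — using the norm-derivative characterization of $\perp_B$, namely that $x \perp_B y$ iff $\rho_-(x,y) \le 0 \le \rho_+(x,y)$ where $\rho_\pm$ are the one-sided Gateaux derivatives of the norm — that this fails unless $p=2$. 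Either route reduces the whole lemma to a one-variable elementary inequality/equation, so once the correct formula for $\perp_B$ in $\ell_p^2(\mathbb{R})$ is in hand the conclusion follows immediately.
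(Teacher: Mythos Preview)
The paper states this lemma without proof, so there is no ``paper's own proof'' to compare against; your proposal is therefore being judged on its own merits.

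Your overall strategy is sound and the computation for $1<p<\infty$ via the supporting functional is correct: plugging $(\alpha,\beta)=(1,2)$ into
\[
|\alpha|^{p-1}\operatorname{sgn}(\alpha)\,\beta - |\beta|^{p-1}\operatorname{sgn}(\beta)\,\alpha = 0
\]
indeed forces $2^{p-1}=2$, hence $p=2$. The second route you sketch --- testing $(1,2)\perp_B(2,-1)$ directly via one-sided norm derivatives in every $\ell_p^2$ --- also works and handles all $1\le p\le\infty$ uniformly.

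There is, however, a concrete slip in your treatment of the endpoints: the example you give fails. In both $\ell_\infty^2$ and $\ell_1^2$ one actually \emph{does} have $(1,1)\perp_B(1,-1)$. Indeed,
\[
\|(1+\lambda,\,1-\lambda)\|_\infty=\max(|1+\lambda|,|1-\lambda|)=1+|\lambda|\ge 1,
\qquad
\|(1+\lambda,\,1-\lambda)\|_1=|1+\lambda|+|1-\lambda|\ge 2,
\]
for all real $\lambda$. So $(1,1)$ is not a valid witness. The pair $(\alpha,\beta)=(1,2)$ that you already use in the smooth case does the job here as well: e.g.\ in $\ell_\infty^2$, $\|(1,2)+0.4\,(2,-1)\|_\infty=\|(1.8,1.6)\|_\infty=1.8<2$, and in $\ell_1^2$, $\|(1,2)-0.1\,(2,-1)\|_1=|0.8|+|2.1|=2.9<3$. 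Replace the $(1,1)$ example by $(1,2)$ (or simply adopt your second, unified route) and the proof goes through.
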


\begin{theorem}
	Let $ \mathbb{X} = \ell_p^2(\mathbb{R}).$ Then $p=2$  if and only if there exists an operator  $ T \in \mathbb{L}(\mathbb{X}, \mathbb{X}^*) $ such that $ \perp_T = \perp_B.$ 
\end{theorem}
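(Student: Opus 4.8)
I would handle the easy direction first: if $p=2$ then $\ell_2^2(\mathbb{R})$ is a real Hilbert space, so the Riesz map $Tx(y)=\langle y,x\rangle$ lies in $\mathbb{L}(\mathbb{X},\mathbb{X}^*)$ and satisfies $x\perp_T y \iff \langle y,x\rangle = 0 \iff x\perp_B y$, exactly as in the Hilbert space characterization proved above; hence $\perp_T=\perp_B$.

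For the converse, the plan is to work with $\mathbb{X}=\mathbb{R}^2$ equipped with the $\ell_p$ norm and to represent a given $T$ by a real $2\times 2$ matrix $M$ via $Tx(y)=\langle y,Mx\rangle$, where $\langle\cdot,\cdot\rangle$ is the Euclidean pairing, so that $x\perp_T y \iff \langle y,Mx\rangle = 0$. The first step is to observe that $M$ is invertible: if $Mx_0=0$ with $x_0\neq 0$, then $x_0\perp_T y$ for all $y$, hence $x_0\perp_B x_0$ by hypothesis, which is impossible since $\|x_0+\lambda x_0\|<\|x_0\|$ for $\lambda=-1$. The second step uses that, for every $1\le p\le\infty$, $e_1\perp_B e_2$ and $e_2\perp_B e_1$ in $\ell_p^2(\mathbb{R})$ (both clear, since $\|(1,t)\|_p\ge 1$ and $\|(t,1)\|_p\ge 1$); using $\perp_T=\perp_B$, this forces $(Me_1)_2=0=(Me_2)_1$, so $M$ is diagonal, and after rescaling $M$ by a nonzero constant (which leaves $\perp_T$ unchanged) we may assume $M=\mathrm{diag}(1,d)$ with $d\neq 0$.

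The third step pins down $d$. Because $M$ is invertible, $x^{\perp_T}$ is a one-dimensional subspace for every $x\neq 0$, so $x^{\perp_B}=x^{\perp_T}$ is a subspace for every $x\neq 0$. This makes $\mathbb{X}$ smooth: a non-smooth $x$ would admit two linearly independent supporting functionals $f,g$, and then $\ker f+\ker g=\mathbb{X}\subseteq x^{\perp_B}$ would give $x\perp_B x$, a contradiction. Hence $1<p<\infty$. At the smooth point $(1,1)$ the supporting functional is proportional to $(1,1)$, so $(1,1)^{\perp_B}=\mathrm{span}\{(1,-1)\}$, while $(1,1)^{\perp_T}=\{y: y_1+dy_2=0\}=\mathrm{span}\{(d,-1)\}$; equating these lines gives $d=1$. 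Thus $x\perp_T y \iff x_1y_1+x_2y_2=0$, i.e. $\perp_T$ is ordinary Euclidean orthogonality; in particular $(\alpha,\beta)\perp_T(\beta,-\alpha)$ for every $(\alpha,\beta)$, hence $(\alpha,\beta)\perp_B(\beta,-\alpha)$ for every $(\alpha,\beta)$, and Lemma \ref{lem} yields $p=2$.

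I expect the main obstacle to be the treatment of the non-smooth cases $p=1$ and $p=\infty$: the cleanest route, as above, is to notice that $\perp_T=\perp_B$ forces every $x^{\perp_B}$ to be a genuine linear subspace and hence forces smoothness, but one must take some care with the supporting-functional argument; alternatively one can exclude $p\in\{1,\infty\}$ by hand, e.g. in $\ell_1^2(\mathbb{R})$ the set $(1,0)^{\perp_B}=\{y:|y_1|\le|y_2|\}$ is visibly not a subspace whereas $(1,0)^{\perp_T}=\mathrm{span}\{(0,1)\}$ is. The remaining bookkeeping --- identifying $\mathbb{X}^*$ with $\mathbb{R}^2$ and tracking the matrix $M$ and signs --- is routine.
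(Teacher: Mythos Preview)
Your argument is correct and follows the same skeleton as the paper: use $e_1\perp_B e_2$ and $e_2\perp_B e_1$ to make the representing matrix diagonal, show the two diagonal entries agree, deduce $(\alpha,\beta)\perp_B(\beta,-\alpha)$ for all $(\alpha,\beta)$, and finish with Lemma~\ref{lem}. The only real difference is in the middle step. The paper simply notes that $(e_1+e_2)\perp_B(e_1-e_2)$ holds in \emph{every} $\ell_p^2$ (by convexity of $t\mapsto|t|^p$ one has $|1+t|^p+|1-t|^p\ge 2$, and likewise for $p=\infty$), so $(e_1+e_2)\perp_T(e_1-e_2)$ immediately gives $a=d$; no discussion of smoothness or of the cases $p\in\{1,\infty\}$ is needed. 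Your detour through ``$x^{\perp_B}$ is a subspace $\Rightarrow$ smoothness $\Rightarrow 1<p<\infty$ $\Rightarrow$ compute the support functional at $(1,1)$'' is valid, but it can be replaced by that one line. On the plus side, your explicit invertibility check for $M$ rules out $T=0$, a degenerate case the paper leaves implicit.
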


\begin{proof}
	Since the necessary part is immediate, we only prove the sufficient part. Let $ e_1, e_2 \in S_{\ell_p^2},$ where $ e_1 = (1, 0)$ and $ e_2 = (0, 1).$ We note that $ e_1 \perp_B e_2,$ $ e_2 \perp_B e_1$ and $ (e_1 + e_2) \perp_B( e_1 - e_2).$ Let $ e_1^*, e_2^* \in \ell_q^2$ be such that $ e_i^*(e_j)= \delta_{ij},$ where $ i \in \{1, 2\}$ and $ \frac{1}{p} + \frac{1}{q} = 1.$ Then we write $Te_1 = ae_1^* + be_2^*$ and $ Te_2 = ce_1^* + de_2^*,$ for some $ a, b, c, d \in \mathbb{R}.$ Since $ \perp_T = \perp_B,$ it is easy to observe that $ Te_1 = ae_1^*$ and $ Te_2 = de_2^*.$ Now using $ (e_1 +e_2) \perp_T (e_1 - e_2),$ we obtain $a = d.$ So $ Te_1 = ae_1^*$ and $ Te_2 = ae_2^*.$ Now for any $ (\alpha, \beta) \in \mathbb{X},$ $ (T(\alpha e_1 + \beta e_2), (\beta e_1 - \alpha e_2)) = 0.$ This implies that $ (\alpha e_1 + \beta e_2) \perp_B (\beta e_1 - \alpha e_2),$ i.e., $(\alpha, \beta) \perp_B (\beta, -\alpha),$ for all $(\alpha, \beta) \in \ell_p^2.$ Then from Lemma \ref{lem}, we have $p=2.$ This completes the proof of the theorem.
\end{proof}

\begin{theorem}
	Let $ \mathbb{X}$ be a two-dimensional real Banach space. Then $ \mathbb{X}$ is a Hilbert space if and only if the following conditions hold true:
	\begin{enumerate}[label=\upshape(\roman*), leftmargin=*, widest=iii]
    \item There exists a $ T \in \mathbb{L}(\mathbb{X}, \mathbb{X}^*)$ such that $ \perp_T = \perp_B$
	\item There exists $u, v \in \mathbb{S}_\mathbb{X}$ such that $ u \perp_B v,$ $ v \perp_B u,$ $ (u+v) \perp_B (u-v)$ and $ \| (\gamma + \kappa\delta)u + (\delta - \kappa\gamma)v\| = \|  (\gamma - \kappa\delta)u + (\delta + \kappa\gamma)v\|,$ for all $ \gamma, \delta, \kappa \in \mathbb{R}.$	
	\end{enumerate}

\end{theorem}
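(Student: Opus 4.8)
The plan is to treat the (routine) necessity separately and then reduce sufficiency to showing that the norm of $\mathbb{X}$, written in the basis $\{u,v\}$, is Euclidean. For necessity, if $\mathbb{X}$ is a real Hilbert space I would take $Tx(y)=\langle y,x\rangle$; this is a bounded linear operator with $\perp_T=\perp_B$, which gives (i), and for any orthonormal pair $u,v$ one has $u\perp_B v$, $v\perp_B u$, $(u+v)\perp_B(u-v)$, while $\|(\gamma+\kappa\delta)u+(\delta-\kappa\gamma)v\|^2$ and $\|(\gamma-\kappa\delta)u+(\delta+\kappa\gamma)v\|^2$ both expand to $(1+\kappa^2)(\gamma^2+\delta^2)$, giving (ii).

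For sufficiency, assume (i) and (ii). Since $u\perp_B v$ and $u,v\in S_{\mathbb{X}}$, the vectors $u,v$ must be linearly independent, hence a basis of $\mathbb{X}$; I would write a typical vector as $\gamma u+\delta v$ and set $N(\gamma,\delta)=\|\gamma u+\delta v\|$. First, arguing exactly as in the proof of the theorem on $\ell_p^2(\mathbb{R})$ above: with $u^*,v^*$ the dual basis and $Tu=au^*+bv^*$, $Tv=cu^*+dv^*$, the relations $u\perp_B v$, $v\perp_B u$, $(u+v)\perp_B(u-v)$ and $\perp_T=\perp_B$ force $b=Tu(v)=0$, $c=Tv(u)=0$ and $a-d=T(u+v)(u-v)=0$, while $a\neq 0$ because $a=0$ would make $T=0$ and hence $\perp_T$ the total relation, impossible in a two-dimensional space. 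Thus $T(\gamma u+\delta v)=a(\gamma u^*+\delta v^*)$, so that $\gamma u+\delta v\perp_B\xi u+\eta v\iff\gamma\xi+\delta\eta=0$; in particular $(\alpha u+\beta v)\perp_B(\beta u-\alpha v)$ for all $\alpha,\beta\in\mathbb{R}$.

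The decisive step is to decode the norm identity in (ii). Identifying $\mathbb{R}^2$ with $\mathbb{C}$ via $(\gamma,\delta)\leftrightarrow z=\gamma+i\delta$ and viewing $N$ as a function on $\mathbb{C}$, one checks that $(1-i\kappa)z=(\gamma+\kappa\delta)+i(\delta-\kappa\gamma)$ and $(1+i\kappa)z=(\gamma-\kappa\delta)+i(\delta+\kappa\gamma)$, so the identity in (ii) reads
\[ N\big((1-i\kappa)z\big)=N\big((1+i\kappa)z\big)\qquad(z\in\mathbb{C},\ \kappa\in\mathbb{R}). \]
Replacing $z$ by $z/(1+i\kappa)$ and setting $\omega(\kappa)=\frac{1-i\kappa}{1+i\kappa}$, which is unimodular, this becomes $N(\omega(\kappa)z)=N(z)$ for all $z$ and $\kappa$; as $\kappa$ runs over $\mathbb{R}$ the Cayley transform $\omega(\kappa)$ sweeps out the whole unit circle except the point $-1$, so by continuity of $N$ we obtain $N(\omega z)=N(z)$ for every unimodular $\omega$. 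Hence $N$ is rotation invariant, and since $\|u\|=1$ this gives $N(z)=|z|$, i.e. $\|\gamma u+\delta v\|=\sqrt{\gamma^2+\delta^2}$; the norm of $\mathbb{X}$ therefore comes from an inner product, so $\mathbb{X}$ is a Hilbert space.

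I expect the main obstacle to be precisely this last recognition — that the somewhat opaque identity in (ii) is exactly the statement that $N$ is invariant under the one-parameter family of rotations given by the Cayley transform, hence under all rotations. An alternative packaging of the same computation, which I would mention as a sanity check, is to combine $(\alpha u+\beta v)\perp_B(\beta u-\alpha v)$ with the identity in (ii) to deduce that $x\perp_B y\Rightarrow\|x+y\|=\|x-y\|$ for all $x,y\in\mathbb{X}$ — that is, Birkhoff–James orthogonality implies isosceles orthogonality — and then invoke the classical fact that this property characterises inner product spaces; but the self-contained rotation-invariance argument above is cleaner and avoids an external citation.
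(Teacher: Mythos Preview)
Your proof is correct, and in fact your ``sanity check'' alternative---combining $(\gamma u+\delta v)\perp_B(\delta u-\gamma v)$ with the norm identity in (ii) to obtain $\perp_B\Rightarrow\perp_I$ and then citing the classical inner-product characterisation---is precisely the route the paper takes, invoking Amir's monograph for the last step. Your primary argument is genuinely different: you read the norm identity in (ii), via the identification $(\gamma,\delta)\leftrightarrow\gamma+i\delta$, as $N((1-i\kappa)z)=N((1+i\kappa)z)$ and deduce rotation invariance of $N$ through the Cayley parametrisation $\omega(\kappa)=\tfrac{1-i\kappa}{1+i\kappa}$ of the unit circle. This buys you a completely self-contained proof with no external citation, and it incidentally reveals something the paper's argument hides: the norm identity in (ii) alone (together with $u,v\in S_{\mathbb X}$ linearly independent) already forces the norm to be Euclidean, so that hypothesis (i) and the orthogonality clauses of (ii) are redundant for sufficiency. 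The paper's approach, by contrast, genuinely uses (i) to manufacture the Birkhoff--James relation $(\gamma u+\delta v)\perp_B(\delta u-\gamma v)$ before matching it with isosceles orthogonality; its virtue is that it situates the result within the well-known $\perp_B\Rightarrow\perp_I$ framework. One cosmetic point: you appeal to continuity of $N$ to cover $\omega=-1$, but $N(-z)=N(z)$ is immediate from the norm axioms.
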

\begin{proof}
	Clearly, if $ \mathbb{X}$ is Hilbert space then $ (i)$ and $(ii)$ holds. We only prove the sufficient part of the theorem. Since $ \perp_T = \perp_B,$ it is easy to note that $ Tu = \alpha u^*$ and $ Tv = \beta v^*,$ where $ \alpha, \beta \in \mathbb{R}$ and $u^*(u) = 1, v^*(v) = 1.$ As $  (u+v) \perp_B (u-v),$ we have $ (T(u+v), (u - v)) = 0,$ which gives us $ \alpha = \beta.$ Now for any $ \gamma, \delta \in \mathbb{R},$ it can be seen that $ (T(\gamma u + \delta v), (\delta u - \gamma v)) = 0. $ From Proposition \ref{basic}(v) we observe that $ \mathbb{X}$ is smooth and therefore, $(\gamma u + \delta v)^{\perp} = \kappa(\delta u - \gamma v),$ i.e., $ (\gamma u + \delta v) \perp_B \kappa(\delta u - \gamma v). $ Also we observe that $ \|(\gamma u + \delta v) + \kappa(\delta u - \gamma v)\| = \|(\gamma + \kappa\delta)u + (\delta - \kappa\gamma)v\| =  \|  (\gamma - \kappa\delta)u + (\delta + \kappa\gamma)v\| = \|(\gamma u + \delta v) - \kappa(\delta u - \gamma v)\|. $ This implies that $(\gamma u + \delta v) \perp_I \kappa(\delta u - \gamma v). $ Thus we get $ \perp_B \implies \perp_I.$ Therefore,  from \cite[Chapter 4]{A},  it can be concluded that $ \mathbb{X}$ is Hilbert space.
\end{proof}

\begin{theorem}
	Let $\mathbb{H}$ be a complex Hilbert space. Then there exists no $ T \in \mathbb{L}(\mathbb{H}, \mathbb{H}^*)$ such that $ \perp_T = \perp_B.$
\end{theorem}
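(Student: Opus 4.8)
The plan is to exploit the contrast between the fundamental properties of Birkhoff--James orthogonality in a (complex) Hilbert space and the structural rigidity that $\perp_T = \perp_B$ would force on $T$. Recall that in a complex Hilbert space $\mathbb{H}$, Birkhoff--James orthogonality coincides with the usual inner-product orthogonality and is therefore \emph{symmetric}: $x \perp_B y \iff y \perp_B x$. Hence if $\perp_T = \perp_B$, then $\perp_T$ is symmetric on all of $\mathbb{H}$, i.e.\ $(Tx,y)=0 \iff (Ty,x)=0$ for all $x,y$. In particular $\perp_T$ is left symmetric at every vector.

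First I would dispose of the degenerate cases. If $T = 0$, then $x \perp_T y$ holds for all $x,y$, which is visibly not $\perp_B$ as soon as $\dim \mathbb{H} \geq 1$; so we may assume $T \neq 0$. Next I would rule out the possibility that every vector is $T$-isotropic: if $(Tx,x)=0$ for all $x$, then polarizing (using that $T$ is complex-linear in the first slot) over $x+y$, $x+iy$ forces $(Tx,y)=0$ for all $x,y$, hence $T=0$, a contradiction. (This is exactly the mechanism used in the necessity part of Theorem~\ref{nonisotropic; isotropic}.) Therefore there exists a nonisotropic vector $x_0$, i.e.\ $(Tx_0,x_0)\neq 0$ and in particular $Tx_0 \neq 0$.

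Now comes the main step. Fix the nonisotropic $x_0$. Since $\perp_T = \perp_B$ is symmetric, $\perp_T$ is left symmetric at $x_0$, so by Lemma~\ref{lemma} we get $(Tx_0,y) = (Ty,x_0)$ for all $y \in \mathbb{H}$; equivalently $T$ agrees with its ``transpose'' against $x_0$. I would then argue that $x_0 \perp_T y \iff x_0 \perp_B y$ pins down the functional $Tx_0$ up to a scalar: the kernel of $Tx_0$ must equal $\{x_0\}^{\perp_B} = \{x_0\}^{\perp}$ (the inner-product orthogonal complement), and since two functionals with the same hyperplane kernel are scalar multiples of each other, $Tx_0 = \mu\,\langle\cdot,x_0\rangle$ for some scalar $\mu \neq 0$, where I identify $\mathbb{H}^*$ with $\mathbb{H}$ via the (conjugate-linear) Riesz map only for bookkeeping. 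The key point to extract is that $Tx_0$ is a \emph{conjugate-linear-in-$x_0$} object — it is the Riesz functional of $x_0$ scaled by $\mu$. But $T$ is an honest complex-\emph{linear} operator from $\mathbb{H}$ to $\mathbb{H}^*$, and for the equality $Tx = \mu\,\langle\cdot,x\rangle$ to persist, I would run the same argument at $i x_0$ (also nonisotropic, with $(T(ix_0),ix_0) = i\bar i (Tx_0,x_0)$... wait, here care is needed since the second slot of $(\cdot,\cdot)$ is linear): more cleanly, I would compare $T(\alpha x_0)(y) = \alpha (Tx_0)(y) = \alpha\mu\langle y,x_0\rangle$ with the requirement that $\alpha x_0 \perp_T y \iff \alpha x_0 \perp_B y$ — consistent — and instead derive the contradiction from the interaction of two linearly independent nonisotropic directions.

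For that final contradiction I would pick a unit vector $e_1$ with $Te_1 = \mu_1 \langle\cdot,e_1\rangle$ (arranged by the kernel argument above, which applies to \emph{every} nonisotropic vector, and nonisotropic vectors are dense/cofinite in the sense of Theorem~\ref{nonisotropic; isotropic}), extend to an orthonormal pair $e_1,e_2$, and examine $T(e_1+e_2)$ versus $T(e_1 - e_2)$. Complex-linearity gives $T(e_1+e_2) = Te_1 + Te_2$ and $T(e_1+ie_2) = Te_1 + iTe_2$; symmetry of $\perp_T=\perp_B$ forces each of these (being nonisotropic generically, or handled directly) to again be a Riesz functional of the corresponding vector: $T(e_1+e_2) = \nu\,\langle\cdot, e_1+e_2\rangle$. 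Expanding both sides and comparing coefficients against $\langle\cdot,e_1\rangle$ and $\langle\cdot,e_2\rangle$ yields $\mu_1 = \nu = \mu_2$ from the $(e_1+e_2)$ computation, but the analogous computation with $e_1+ie_2$ produces $Te_1 + iTe_2 = \tau\langle\cdot,e_1+ie_2\rangle = \tau\langle\cdot,e_1\rangle + \bar i\,\tau\langle\cdot,e_2\rangle = \tau\langle\cdot,e_1\rangle - i\tau\langle\cdot,e_2\rangle$, so matching coefficients gives $\mu_1 = \tau$ and $i\mu_2 = -i\tau$, i.e.\ $\mu_2 = -\tau = -\mu_1$, contradicting $\mu_1=\mu_2$ (and both are nonzero). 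Hence no such $T$ exists.

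The main obstacle I anticipate is the careful bookkeeping in the last paragraph: the bilinear form $(T\cdot,\cdot)$ is linear in \emph{both} slots, whereas the Hilbert inner product is conjugate-linear in one slot, and it is precisely this mismatch between a complex-\emph{bilinear} form and a \emph{sesquilinear} one that drives the contradiction — so one must be scrupulous about which vectors are genuinely nonisotropic (to legitimately invoke Lemma~\ref{lemma}) and about conjugates when expanding $\langle \cdot, e_1 + ie_2\rangle$. An alternative, perhaps cleaner, route to the same end: show directly that $\perp_T = \perp_B$ forces the sesquilinear form $\langle\cdot,\cdot\rangle$ to equal a complex-bilinear form on $\mathbb{H}\times\mathbb{H}$ after rescaling, which is impossible in complex dimension $\geq 1$ because a nonzero complex-bilinear form $B$ satisfies $B(ix,ix) = -B(x,x)$ while $\langle ix,ix\rangle = \langle x,x\rangle > 0$.
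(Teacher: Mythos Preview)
Your proof is correct and follows the paper's route: both deduce from $\perp_T=\perp_B$ that $\ker Tx=x^{\perp}$ for every nonzero $x$, hence $Tx=\mu_x\langle\,\cdot\,,x\rangle$ with $\mu_x\neq 0$, and then obtain a contradiction from the clash between the complex-linearity of $T$ and the conjugate-linearity of $x\mapsto\langle\,\cdot\,,x\rangle$. Your detour through Lemma~\ref{lemma} and the nonisotropic hypothesis is unnecessary (the kernel argument already applies to every nonzero $x$, since $Tx=0$ would force $x\perp_B y$ for all $y$), and your explicit $e_1,\,e_2,\,e_1+e_2,\,e_1+ie_2$ comparison is simply a more careful unpacking of what the paper compresses into the single claim ``$T(\alpha x)=\bar\alpha\,Tx$'' for $\alpha\notin\mathbb{R}$.
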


\begin{proof}
	Suppose on the contrary that there exists $ T \in \mathbb{L}(\mathbb{H}, \mathbb{H}^*)$ such that $\perp_T = \perp_B.$ For any nonzero $ x\in \mathbb{H},$ we have $ x \perp_T y \iff x \perp_B y,$ for all  $ y \in \mathbb{H}.$ This implies that $ (Tx, y) = 0 \iff x^*(y) = 0,$ where $ x^*$ is the support functional of $x.$ Therefore, one can see that $ ker Tx = ker~ x^*.$ This gives us $ Tx = \lambda_x x^*,$ for some $ \lambda_x \in \mathbb{C}.$ One can easily observe that $T$ is bijective and therefore, $\lambda_x \neq 0.$ Now choosing an $\alpha \in \mathbb{C}$ with nonzero imaginary part, it can be clearly seen that $ T(\alpha x) = \bar{\alpha} Tx.$ Thus we get $ T$ is not linear. This contradicts $ T \in \mathbb{L}(\mathbb{H}, \mathbb{H}^*).$ Hence the theorem.   
\end{proof}

\subsection*{Acknowledgements}
The research of Dr. Debmalya Sain is supported by grant PID2021-122126NB-C31 funded by MCIN/AEI/10.13039/501100011033 and by ``ERDF A way of making Europe". The second author would like to thank  CSIR, Govt. of India, for the financial support in the form of Junior Research Fellowship under the mentorship of Prof. Kallol Paul.

%%%%%%%%%%% To ease editing, use normal size for the references:

\normalsize

\end{document}